\newtheorem{theorem}{Theorem}[section]
\newtheorem*{theorem*}{Theorem} 
\newtheorem*{lemma*}{Lemma}
\begin{document}

\title{Fast Linearized Bregman Iteration for Compressive  Sensing and
Sparse Denoising}
\author{Stanley Osher\thanks{Department of Mathematics, UCLA, Los Angeles, CA 90095
(\texttt{sjo@math.ucla.edu}) This author¡¯s research was supported
by ONR Grant N000140710810, a grant from the Department of Defense
and NIH Grant UH54RR021813} \and Yu Mao\thanks{Department of
Mathematics, UCLA, Los Angeles, CA 90095
(\texttt{ymao29@math.ucla.edu}) This author¡¯s research was
supported by NIH Grant UH54RR021813} \and Bin
Dong\thanks{Department of Mathematics, UCLA, Los Angeles, CA 90095
(\texttt{bdong@math.ucla.edu}) This author¡¯s research was
supported by NIH Grant UH54RR021813} \and Wotao
Yin\thanks{Department of Computational and Applied Mathematics,
Rice University, Houston, TX 77005 (\texttt{wotao.yin@rice.edu})
This author¡¯s research was supported by NSF Grant DMS-0748839 and
an internal faculty research grant from the Dean of Engineering at
Rice University} }

\pagestyle{myheadings} \markboth{Fast Linearized Bregman Iteration
for Compressive Sensing and Sparse Denoising}{Stanley Osher, Yu
Mao, Bin Dong, Wotao Yin}
\date{December 11, 2008}
 \maketitle

\begin{abstract}
We propose and analyze an extremely fast, efficient and simple
method for solving the problem:
\begin{displaymath}
\min\{\|u\|_1 : Au=f, u \in  R^n\}.
\end{displaymath}
This method was first described in \cite{DO}, with more details in
\cite{YOGD} and rigorous theory given in \cite{COS1} and
\cite{COS2}. The motivation was compressive sensing, which now has
a vast and exciting history, which seems to have started with
Candes, et.al. \cite{CRT} and Donoho, \cite{Do2}. See \cite{YOGD},
\cite{COS1} and \cite{COS2} for a large set of references. Our
method introduces an improvement called ``kicking" of the very
efficient method of \cite{DO}, \cite{YOGD} and also applies it  to
the problem of denoising of undersampled signals. The use of
Bregman iteration for denoising of images began in \cite{OBGXY}
and led to improved results for total variation based methods.
Here we apply it to denoise signals, especially essentially sparse
signals, which might even be undersampled.
\end{abstract}

\section{Introduction}

Let $A \in R^{m\times n}$, with $n > m$ and $f \in R^m$, be given.
The aim of a basis pursuit problem is to find $u \in  R^n$ by
solving the constrained minimization problem:
\begin{equation}
\min_{u \in  R^{n}} \{J(u)|Au=f\} \label{1.1}
\end{equation}
where $J(u)$ is a continuous convex function.

For basis pursuit, we take:
\begin{equation}
J(u) = |u|_1 = \sum_{j=1}^n |u_j|. \label{1.2}
\end{equation}
We assume that $AA^T$ is invertible. Thus $Au = f$ is
underdetermined and has at least one solution,
$u=A^T(AA^T)^{-1}f$, which minimizes the $\ell_2$ norm. We also
assume that $J(u)$ is coercive, i.e., whenever $\|u\| \rightarrow
\infty, \ J(u) \rightarrow \infty$. This implies that the set of
all solutions of (\ref{1.1}) is nonempty and convex. Finally, when
$J(u)$ is strictly or strongly convex, the solution of (\ref{1.1})
is unique.

Basis pursuit arises from many applications. In particular, there
has been a recent burst of research in compressive sensing, which
involves solving (\ref{1.1}), (\ref{1.2}). This was led by Candes
et.al. \cite{CRT}, Donoho, \cite{Do2}, and others, see
\cite{YOGD}, \cite{COS1} and \cite{COS2} for extensive references.
Compressive sensing guarantees, under appropriate circumstances,
that the solution to (\ref{1.1}), (\ref{1.2}) gives the sparsest
solution satisfying $Au=f$. The problem then becomes one of
solving (\ref{1.1}), (\ref{1.2}) fast. Conventional linear
programming solvers are not tailored for the large scale dense
matrices $A$ and the sparse solutions $u$ that arise here. To
overcome this, a linearized Bregman iterative procedure was
proposed in \cite{DO} and analyzed in \cite{YOGD}, \cite{COS1} and
\cite{COS2}. In \cite{YOGD}, true, nonlinear Bregman iteration was
also used quite successfully for this problem.

Bregman iteration applied to (\ref{1.1}), (\ref{1.2}) involves
solving the constrained optimization problem through solving a
small number of unconstrained optimization problems:
\begin{equation}
\min_u \left\{\mu |u|_1 + \frac{1}{2} \|Au-f\|_2^2\right\}
\label{1.3}
\end{equation}
for $\mu > 0$.

In \cite{YOGD} we used a method called the fast fixed point
continuation solver (FPC) \cite{HYZ}   which appears to be
efficient. Other solvers of (\ref{1.3}) could be used in this
Bregman iterative regularization procedure.

Here we will improve and analyze a linearized Bregman iterative
regularization procedure, which, in its original incarnation,
\cite{DO}, \cite{YOGD}, involved only a two line code and simple
operations and was already extremely fast and accurate.

In addition, we are interested in the denoising properties of
Bregman iterative regularization, for signals, not images. The
results for images involved the BV norm, which we may discretize
for $n\times n$ pixel images as:
\begin{equation}
TV(u) = \sum_{i,j=1}^{n-1} ((u_{i+1,j}-u_{ij})^2 + (u_{i,j+1} -
u_{ij})^2)^{\frac{1}{2}}. \label{1.4}
\end{equation}
We usually regard the success of the ROF TV based model \cite{ROF}

\begin{equation}
\min_u \left\{TV(u) + \frac{\lambda}{2} \|f-u\|^2\right\}
\label{1.5}
\end{equation}
(we now drop the subscript 2 for the $L_2$ norm throughout the
paper) as due to the fact that images have edges and in fact are
almost
 piecewise constant (with texture added). Therefore, it is not
surprising that sparse signals could be denoised using
(\ref{1.3}). The ROF denoising model was greatly improved in
\cite{OBGXY} and  \cite{CHF}  with the help of Bregman iterative
regularization. We will do the same thing here using Bregman
iteration with (\ref{1.3}) to denoise sparse signals, with the
added touch of undersampling the noisy signals.

The paper is organized as follows: In section 2 we describe
Bregman iterative algorithms, as well as the linearized version.
We motivate these methods and describe previously obtained
theoretical results. In section 3 we introduce an improvement to
the linearized version, call ``kicking" which greatly speeds up
the method, especially for solutions $u$ with a large dynamic
range. In section 4 we present numerical results, including sparse
recovery for $u$ having large dynamic range, and the recovery of
signals in large amounts of noise. In another work in progress
\cite{LOT}   we apply these ideas to denoising very blurry and
noisy signals remarkably well including sparse recovery for $u$.
By blurry we mean situations where $A$ is perhaps a subsampled
discrete convolution matrix whose elements decay to zero with $n$,
e.g. random rows of a discrete Gaussian.

\section{Bregman and Linearized Bregman Iterative Algorithms}

The Bregman distance \cite{Br}, based on the convex function $J$,
between points $u$ and $v$, is defined by
\begin{equation}
D_J^p(u,v) = J(u) - J(v) - \langle p,u-v\rangle \label{2.1}
\end{equation}
where $p \in  \partial J(v)$ is an element in the subgradient of
$J$ at the point $v$. In general $D_J^p(u,v)\not= D_J^p(v,u)$ and
the triangle inequality is not satisfied, so $D_J^p(u,v)$ is not a
distance in the usual sense. However it does measure the closeness
between $u$ and $v$ in the sense that $D_J^p(u,v) \geq 0$ and
$D_J^p(u,v) \geq D_J^p(w,v)$ for all points $w$ on the line
segment connecting $u$ and $v$. Moreover, if $J$ is convex,
$D_J^p(u,v) \geq 0$, if $J$ is strictly convex $D_J^p(u,v) > 0$
for $u \not= v$ and if $J$ is strongly convex, then there exists a
constant $a > 0$ such that
\[
D_J^p(u,v) \geq a\|u-v\|^2.
\]

To solve (\ref{1.1}) Bregman iteration was proposed in \cite{YOGD}
 . Given $u^0 = p^0 = 0$, we define:
\begin{align}
u^{k+1} &= \arg\min_{u\in R^{n}} \left\{J(u)-J(u^k) - \langle
u-u^k,p^k\rangle + \frac{1}{2}
\|Au-f\|^2\right\}  \label{2.2}\\
p^{k+1} &= p^k - A^T (Au^{k+1}-f). \nonumber
\end{align}
This can be written as
\begin{displaymath}
u^{k+1} = \arg\min_{u\in R^{2}} \left\{D_J^{p^{k}} (u,u^k) +
\frac{1}{2} \|Au-f\|^2\right\}. \nonumber
\end{displaymath}

It was proven in \cite{YOGD}   that if $J(u) \in  C^2 (\Omega)$
and is strictly convex in $\Omega$, then $\|Au^k-f\|$ decays
exponentially whenever $u^k \in  \Omega$ for all $k$. Furthermore,
when $u^k$ converges, its limit is a solution of (\ref{1.1}). It
was also proven in \cite{YOGD}   that when $J(u) = |u|_1$, i.e.
for problem (\ref{1.1}) and (\ref{1.2}), or when $J$ is a convex
function satisfying some additional conditions, the iteration
(\ref{2.2}) leads to a solution of (\ref{1.1}) in finitely many
steps.

As shown in \cite{YOGD}, see also \cite{OBGXY}, \cite{CHF}, the
Bregman iteration (\ref{2.2}) can be written as:
\begin{align}
f^{k+1} &= f^k + f-Au^k \nonumber \\
u^{k+1} &= \arg\min_{u\in R^{n}} \left\{J(u) + \frac{1}{2}
\|Au-f^{k+1}\|^2\right\} \label{2.3}
\end{align}
This was referred to as ``adding back the residual" in
\cite{OBGXY}  . Here $f^0 = 0, u^0 = 0$. Thus the Bregman
iteration uses solutions of the unconstrained problem
\begin{equation}
\min_{u\in R} \left\{J(u) + \frac{1}{2} \|Au-f\|^2\right\}
\label{2.4}
\end{equation}
as a solver in which the Bregman iteration applies this process
iteratively.

Since there is generally no explicit expression for the solver of
(\ref{2.2}) or (\ref{2.3}), we turn to iterative methods. The
linearized Bregman iteration which we will analyze, improve and
use here is generated by
\begin{align}
u^{k+1} &= \arg\min_{u\in R^{n}} \left\{J(u)-J(u^k) - \langle
u-u^k,p^k\rangle +
\frac{1}{2\delta} \|u-(u^k - \delta A^T(Au^k-f))\|^2\right\} \nonumber \\
p^{k+1} &= p^k - \frac{1}{\delta} (u^{k+1} - u^k) - A^T(Au^k-f)
\label{2.5}.
\end{align}

In the special case considered here, where $J(u) = \mu \|u\|_1$,
then we have the two line algorithm
\begin{align}
v^{k+1} &= v^k - A^T (Au^{k} - f) \label{2.6.1}\\
u^{k+1} &= \delta \cdot \text{shrink}  (v^{k+1},\mu) \label{2.6.2}
\end{align}
where $v^k$ is an auxiliary variable
\begin{equation}
v^k = p^{k} + \frac{1}{\delta} u^{k} \label{2.6}
\end{equation}
and
\begin{align*}
\text{shrink}  (x,\mu): =
\begin{cases} x-\mu, & \text{if} \ x > \mu \\
0, & \text{if} \ -\mu \leq x \leq \mu \\
x + \mu, & \text{if} \ x < -\mu
\end{cases}
\end{align*}
is the soft thresholding algorithm \cite{Do1}  .

This linearized Bregman iterative algorithm was invented in
\cite{DO}   and used and analyzed in \cite{YOGD},\cite{COS1} and
\cite{COS2}. In fact it comes from the inner-outer iteration for
(\ref{2.2}). In \cite{YOGD}   it was shown that the linearized
Bregman iteration (\ref{2.5}) is just one step of the inner
iteration for each outer iteration. Here we repeat the arguments
also in \cite{YOGD}, which begin by summing the second equation in
(\ref{2.5}) arriving at (using the fact that $u^0=p^0=0$):
\begin{eqnarray}
&p^k + \dfrac{1}{\delta} u^k + \sum_{j=0}^{k-1} A^T(Au^j-f)
\nonumber  = p^k + \dfrac{1}{\delta}  u^k - v^{k} = 0, \
\text{for} \ k=1,2,\ldots.
\end{eqnarray}
This gives us (\ref{2.6.2}), and allows us to rewrite its first
equation as:
\begin{equation}
u^{k+1} = \arg\min_{u\in R^{n}} \left\{J(u) + \frac{1}{2\delta}
\|u - \delta v^{k+1}\|^2\right\} \label{2.7}
\end{equation}
i.e. we are adding back the ``linearized noise", where $v^{k+1}$
is defined in (\ref{2.6.1}).

In \cite{YOGD} and \cite{COS1} some interesting analysis was done
for (\ref{2.5}), (and some for (\ref{2.7})). This was done first
for $J(u)$ continuously differentiable in (\ref{2.5}) and the
gradient $\partial J(u)$ satisfying
\begin{equation}
\|\partial J(u) - \partial J(v)\|^2 \leq \beta \langle \partial
J(u) - \partial J(v), u-v\rangle, \label{2.8}
\end{equation}
$\forall u,v \in  R^n, \ \beta > 0$.
In \cite{COS1} it was shown that, if (\ref{2.8}) is true, then
both of the sequences $(u^k)_{k\in N}$ and $(p^k)_{k\in N}$
defined by (\ref{2.5}) converge for $0 < \delta <
\frac{2}{\|AA^T\|}$.

In \cite{COS2} the authors recently give a theoretical analysis,
showing that the iteration in (\ref{2.6.1}) and (\ref{2.6.2})
converges to the unique solution of

\begin{equation}\min_{u\in R^{n}} \left\{\mu\|u\|_1 + \frac{1}{2\delta}
\|u\|^2: Au=f\right\} \label{2.11}
\end{equation}
They also show the interesting result: let $S$ be the set of all
solutions of the Basis Pursuit problem (\ref{1.1}), (\ref{1.2})
and let
\begin{equation}
u_1 = \arg\min_{u\in S} \|u\|^2 \label{2.16}
\end{equation}
which is unique. Denote the solution of (\ref{2.11}) as $u_\mu^*$.
Then
\begin{equation}
\lim_{\mu\rightarrow\infty} \|u_{\mu}^* - u_1\| = 0. \label{2.17}
\end{equation}
In passing they show that
\begin{equation}
\|u_{   \mu}^*\| \leq \|u_1\| \ \text{for all} \ \mu > 0
\label{2.18}
\end{equation}
which we will use below.

Another theoretical analysis on Linearized Bregman algorithm is
given by Yin in \cite{Yinprivate}, where he shows that Linearized
Bregman iteration is equivalent to gradient descent applied to the
dual of the problem (\ref{2.11}) and uses this fact to obtain an
elegant convergence proof.

This summarizes the relevant convergence analysis for our Bregman
and linearized Bregman models.

Next we recall some results from \cite{OBGXY}   regarding noise
and Bregman iteration.

For any sequence $\{u^k\}, \{p^k\}$ satisfying (\ref{2.2}) for $J$
continuous and convex, we have, for any $\tilde\mu$
\begin{eqnarray}
D_J^{p^{k}} (\tilde u,u^k) - D_Jp^{k-1} (\tilde u, u^{k-1}) \leq
\langle A\tilde u - f, Au^{k-1}-f\rangle - \|Au^{k-1}-f\|^2.
\label{2.19}
\end{eqnarray}

Besides implying that the Bregman distance between $u^k$ and any
element $\tilde u$ satisfying $A\tilde u = f$ is monotonically
decreasing, it also implies that, if $\tilde u$ is the ``noise
free" approximation to the solution of (\ref{1.1}), the Bregman
distance between $u^k$ and $\tilde u$ diminishes as long as
\begin{equation}
\|Au^k - f \| > \|A\tilde u - f\| = \sigma, \ \text{where} \
\sigma \ \text{is some measure of the noise} \label{2.20}
\end{equation}
i.e., until we get too close to the noisy signal in the sense of
(\ref{2.20}). Note, in \cite{OBGXY}   we took $A$ to be the
identity, but these more general results are also proven there.
This gives us a stopping criterion for our denoising algorithm.

In \cite{OBGXY}   we obtained a result for linearized Bregman
iteration, following \cite{Ba}, which states that the Bregman
distance between $\tilde u$ and $u^k$ diminish as long as
\begin{equation}
\|A\tilde u - f\| < (1-2\delta \|A A^T\|) \ \|Au^k-f\|
\label{2.21}
\end{equation}
so we need $0 < 2\delta \|A A^T\| < 1$.

In practice, we will use (\ref{2.20}) as our stopping criterion.

\section{Convergence}

We begin with the following simple results for the linearized
Bregman iteration or the equivalent algorithm (\ref{2.5}).

\begin{theorem}\label{thm1}
If $u^k \rightarrow u^\infty$, then $A u^\infty = f$.
\end{theorem}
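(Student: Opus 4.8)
The plan is to use the iteration equations directly. Looking at equations (\ref{2.6.1}) and (\ref{2.6.2}), the iteration is $v^{k+1} = v^k - A^T(Au^k - f)$ and $u^{k+1} = \delta\cdot\mathrm{shrink}(v^{k+1},\mu)$. Suppose $u^k \to u^\infty$. The key observation is that $u^k = \delta\cdot\mathrm{shrink}(v^k,\mu)$, so the shrink operator forces $\|u^k\|_\infty$ to relate to $\|v^k\|$, but more importantly, I want to extract information about $Au^k - f$ from the convergence of $u^k$.

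First I would argue that $v^k$ is bounded, or rather, examine the behavior of $v^{k+1} - v^k = -A^T(Au^k - f)$. Since $u^k \to u^\infty$, we have $A^T(Au^k-f) \to A^T(Au^\infty - f)$. If this limit were nonzero, then the increments $v^{k+1}-v^k$ would converge to a nonzero vector, so $v^k$ would march off to infinity in the direction $-A^T(Au^\infty - f)$. But then $u^{k+1} = \delta\cdot\mathrm{shrink}(v^{k+1},\mu)$: on coordinates where the component of $-A^T(Au^\infty-f)$ is nonzero, $v^k_j \to \pm\infty$, hence $u^k_j = \delta\,\mathrm{shrink}(v^k_j,\mu) \to \pm\infty$, contradicting the convergence of $u^k$. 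Therefore $A^T(Au^\infty - f) = 0$, i.e. $Au^\infty - f \in \ker A^T$. Since $AA^T$ is invertible (assumed in the introduction), $A^T$ has trivial kernel, so $Au^\infty - f = 0$, which is the claim.

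The main obstacle — really the only subtlety — is making the "$v^k$ marches off to infinity on some coordinate" argument airtight: one should note $v^{k+1} = v^1 + \sum_{j=1}^{k}(v^{j+1}-v^j) = v^1 - \sum_{j=1}^k A^T(Au^j - f)$, and if $w := A^T(Au^\infty - f) \neq 0$ pick a coordinate $j$ with $w_j \neq 0$; since the summands $A^T(Au^j-f)$ converge to $w$, the partial sums $\sum_{j=1}^k (A^T(Au^j-f))_j$ diverge to $\pm\infty$ (Cesàro-type reasoning: eventually each term is within $|w_j|/2$ of $w_j$, so they accumulate without bound), forcing $|v^{k+1}_j| \to \infty$ and hence $|u^{k+1}_j| \to \infty$ since $\mathrm{shrink}(x,\mu) \to \pm\infty$ as $x \to \pm\infty$. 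This contradicts $u^k \to u^\infty \in \mathbb{R}^n$. I would also remark that one can alternatively run the argument through (\ref{2.5}) in the general $J$ setting: from the $p$-update, $p^{k+1} + \frac{1}{\delta}u^{k+1} - p^k - \frac{1}{\delta}u^k = -A^T(Au^k-f)$, and boundedness of $p^k + \frac1\delta u^k$ (which must hold along a convergent subsequence argument, or via the explicit shrink form) gives the same conclusion, but the $\ell_1$ case via shrink is cleanest and suffices for the statement as used in this paper.
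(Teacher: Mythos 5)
Your proof is correct and takes essentially the same route as the paper: assume $A^T(Au^\infty-f)\neq 0$, observe that the increments $v^{k+1}-v^k$ then converge to a nonzero vector so some coordinate of $v^k$ must diverge, and derive a contradiction from the convergence of $u^k$. The only cosmetic difference is that you get the contradiction from the explicit shrink relation (diverging $v^k_j$ would drag $u^k_j$ to infinity), while the paper states the same fact as boundedness of $v^k=u^k/\delta+p^k$ with $p^k\in[-\mu,\mu]$; these are equivalent formulations of the same observation.
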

\begin{proof}
 Assume $Au^\infty \not= f$.  Then $A^T(Au^\infty - f)
\not= 0$ since $A^T$ has full rank. This means that for some $i$,
$(A^T(Au^k - f))_i$ converges to a nonzero value, which means that
$v_i^{k+1}-v_i^k$ does as well. On the other hand $\{v^k\} = \{
u^k/\delta + p^k\}$ is bounded since $\{u^k\}$ converges and $p^k
\in [-\mu,\mu]$. Therefore $\{v_i^k\}$ is bounded,  while
$v_i^{k+1}-v_i^k$ converges to a nonzero limit, which is
impossible.
 \end{proof}
\begin{theorem}\label{thm2}
If $u^k \rightarrow u^\infty$ and $v^k \rightarrow v^\infty$, then
$u^\infty$ minimizes $\{J(u) + \frac{1}{2\delta} \|u\|^2: Au =
f\}$.
\end{theorem}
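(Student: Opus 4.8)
The plan is to pass to the limit in the two-line algorithm \eqref{2.6.1}--\eqref{2.6.2} and then invoke the optimality condition \eqref{2.7}. By Theorem~\ref{thm1} we already know $Au^\infty = f$, so $u^\infty$ is feasible for the constrained problem \eqref{2.11}; it remains to show it is the minimizer. The natural route is to extract, from convergence of $u^k$ and $v^k$, a limiting relation of the form $u^\infty = \delta\cdot\text{shrink}(v^\infty,\mu)$ together with the summed identity $p^\infty + \frac{1}{\delta}u^\infty = v^\infty$, and to recognize this pair of relations as exactly the first-order optimality (Karush--Kuhn--Tucker) conditions for \eqref{2.11}.

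First I would take $k \to \infty$ in \eqref{2.6.2} and in the definition \eqref{2.6} of $v^k$; since shrink is continuous, this yields $u^\infty = \delta\cdot\text{shrink}(v^\infty,\mu)$ and hence, setting $p^\infty := v^\infty - \frac{1}{\delta}u^\infty$, one checks componentwise that $p^\infty \in \partial J(u^\infty)$ where $J(u)=\mu\|u\|_1$ — i.e. $p^\infty_i = \mu\,\mathrm{sgn}(u^\infty_i)$ when $u^\infty_i \neq 0$ and $|p^\infty_i|\le\mu$ when $u^\infty_i = 0$ (this is the standard characterization of the shrink operator as the proximal map of $\mu\|\cdot\|_1$, equivalently the resolvent of $\partial J$). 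Next, taking $k\to\infty$ in \eqref{2.6.1} forces $A^T(Au^\infty - f) = 0$, consistent with $Au^\infty = f$; more to the point, I want to produce a vector $\lambda$ with $p^\infty + \frac{1}{\delta}u^\infty = A^T\lambda$, so that the stationarity condition for the Lagrangian $\mu\|u\|_1 + \frac{1}{2\delta}\|u\|^2 - \langle\lambda, Au-f\rangle$ holds at $u^\infty$. This is where I would use the summed second equation of \eqref{2.5}, namely the displayed identity $p^k + \frac{1}{\delta}u^k = v^k = \sum_{j=0}^{k-1}A^T(f-Au^j)$, which exhibits $v^k$ — and therefore $v^\infty$ — as lying in the range of $A^T$; so $v^\infty = A^T\lambda$ for some $\lambda$, giving $p^\infty + \frac{1}{\delta}u^\infty = A^T\lambda$.

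Having both $p^\infty + \frac{1}{\delta}u^\infty = A^T\lambda$ with $p^\infty\in\partial(\mu\|\cdot\|_1)(u^\infty)$, and $Au^\infty = f$, I would conclude by the standard sufficiency of KKT conditions for a convex program: the objective $\mu\|u\|_1 + \frac{1}{2\delta}\|u\|^2$ is convex (indeed strictly convex, giving uniqueness as asserted in the surrounding text), the constraint is affine, so any feasible point satisfying stationarity is the global minimizer. Concretely, for any competitor $u$ with $Au = f$, convexity gives $J(u) + \frac{1}{2\delta}\|u\|^2 \ge J(u^\infty) + \frac{1}{2\delta}\|u^\infty\|^2 + \langle p^\infty + \frac{1}{\delta}u^\infty, u - u^\infty\rangle = J(u^\infty) + \frac{1}{2\delta}\|u^\infty\|^2 + \langle A^T\lambda, u-u^\infty\rangle = J(u^\infty) + \frac{1}{2\delta}\|u^\infty\|^2 + \langle\lambda, f-f\rangle$, which is the claim.

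The main obstacle is the middle step: cleanly establishing that $v^\infty$ lies in the range of $A^T$. Passing limits through shrink and through \eqref{2.6.1} is routine, and the convexity/KKT argument at the end is standard; but the range condition relies on the telescoped form of the iteration, and one must be a little careful that the partial sums $v^k = \sum_{j=0}^{k-1}A^T(f-Au^j)$ genuinely converge to $v^\infty$ (which is given) and that membership in the closed subspace $\mathrm{range}(A^T)$ is preserved in the limit — true because finite-dimensional subspaces are closed. An alternative that sidesteps this is to argue directly that $u^\infty$ equals the known limit $u_\mu^*$ of \eqref{2.11} via the uniqueness in \cite{COS2}, but the self-contained KKT route above is cleaner and does not import that machinery.
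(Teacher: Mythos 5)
Your proof is correct and follows essentially the same route as the paper's: both arguments rest on the two facts that $v^\infty$ is a subgradient of $\tilde J(u)=\mu\|u\|_1+\frac{1}{2\delta}\|u\|^2$ at the limit and that $v^\infty\in\mathrm{range}(A^T)$ (from the telescoped sum $v^k=\sum_{j=0}^{k-1}A^T(f-Au^j)$), so the subgradient inequality combined with feasibility $Au^\infty=f$ kills the cross term and yields optimality. The only cosmetic difference is that the paper reads off $p^k\in\partial J(u^k)$ from the iteration and passes to the limit in the Bregman-distance (subgradient) inequality against $u_{\mathrm{opt}}$, whereas you obtain the subgradient relation at the limit directly via continuity of the shrink (prox) map and phrase the conclusion as KKT sufficiency against an arbitrary feasible competitor.
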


\begin{proof} Let $\tilde J(u) = J(u) + \frac{1}{2\delta} \|u\|^2$.
then
\begin{displaymath}
\partial \tilde J(u) = \partial J(u) + \frac{1}{\delta} u.
\end{displaymath}
Since $\partial J(u^k) = p^k = v^k - u^k/\delta$, we have
$\partial \tilde J(u^k) = v^k$. Using the non-negativity of the
Bregman distance we obtain
\begin{align*}
\tilde J(u^k) &\leq \tilde J(u_{\text{opt}}) -
\langle u_{\text{opt}} - u^k, \partial \tilde J (u^k)\rangle \nonumber \\
&= \tilde J(u_{\text{opt}}) - \langle u_{\text{opt}} - u^k,
v^k\rangle \nonumber
\end{align*}
where $u_{\text{opt}}$ minimizes (\ref{1.1}) with $J$ replaced by
$\tilde J$, which is strictly convex.

Let $k \rightarrow \infty$, we have
\begin{displaymath}
\tilde J(u^\infty) \leq \tilde J(u_{\text{opt}}) - \langle
u_{\text{opt}} - u^\infty, v^\infty \rangle
\end{displaymath}
Since $v^k = A^T \sum_{j=0}^{k-1} A^T (f - Au^j)$, we have
$v^\infty \in \text{range}(A^T)$. Since $A u_{\text{opt}} =
Au^\infty = f$, we have $\langle u_{\text{opt}} - u^\infty,
v^\infty\rangle = 0$, which implies $\tilde J(u^\infty) \leq
\tilde J(u_{\text{opt}})$.
 \end{proof}
Equation (\ref{2.11}) (from a result in \cite{COS1}  ) implies
that $u^\infty$ will approach a solution to (\ref{1.1}),
(\ref{1.2}), as $\mu$ approaches $\infty$.

The linearized Bregman iteration has the following monotonicity
property:

\begin{theorem}\label{thm3}
If $u^{k+1} \not= u^k$ and $0 < \delta < 2/\|AA^T\|$, then
\begin{displaymath}
\|Au^{k+1} - f\| < \| Au^k-f\|.
\end{displaymath}
\end{theorem}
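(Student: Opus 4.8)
The plan is to expand $\|Au^{k+1}-f\|^2$ around $\|Au^k-f\|^2$ and show the difference is strictly negative under the stated step-size restriction. Write $r^k = Au^k - f$ and $\Delta = u^{k+1}-u^k$, so that $Au^{k+1}-f = r^k + A\Delta$ and hence
\begin{equation}
\|r^{k+1}\|^2 = \|r^k\|^2 + 2\langle A\Delta, r^k\rangle + \|A\Delta\|^2 . \nonumber
\end{equation}
To control the cross term I would use the optimality characterization of $u^{k+1}$ coming from (\ref{2.5}). Subdifferential optimality for the minimization defining $u^{k+1}$ gives $p^{k+1} = p^k - \tfrac{1}{\delta}\Delta - A^Tr^k$ with $p^{k+1}\in\partial J(u^{k+1})$, $p^k\in\partial J(u^k)$. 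Pairing the difference $p^{k+1}-p^k = -\tfrac{1}{\delta}\Delta - A^Tr^k$ against $\Delta = u^{k+1}-u^k$ and invoking monotonicity of the subdifferential of the convex function $J$ (so $\langle p^{k+1}-p^k,\Delta\rangle \ge 0$) yields
\begin{equation}
0 \le \langle p^{k+1}-p^k,\Delta\rangle = -\tfrac{1}{\delta}\|\Delta\|^2 - \langle A^Tr^k,\Delta\rangle = -\tfrac{1}{\delta}\|\Delta\|^2 - \langle r^k, A\Delta\rangle . \nonumber
\end{equation}
Hence $\langle A\Delta, r^k\rangle \le -\tfrac{1}{\delta}\|\Delta\|^2$.

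Substituting this bound into the expansion gives
\begin{equation}
\|r^{k+1}\|^2 \le \|r^k\|^2 - \tfrac{2}{\delta}\|\Delta\|^2 + \|A\Delta\|^2 \le \|r^k\|^2 - \left(\tfrac{2}{\delta} - \|AA^T\|\right)\|\Delta\|^2 , \nonumber
\end{equation}
using $\|A\Delta\|^2 = \langle A^TA\Delta,\Delta\rangle \le \|A^TA\|\,\|\Delta\|^2 = \|AA^T\|\,\|\Delta\|^2$. Since $0<\delta<2/\|AA^T\|$ we have $\tfrac{2}{\delta}-\|AA^T\|>0$, and since $u^{k+1}\ne u^k$ we have $\|\Delta\|^2>0$; therefore $\|r^{k+1}\|^2 < \|r^k\|^2$, which is the claim.

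I expect the only delicate point to be the justification that $\langle p^{k+1}-p^k, u^{k+1}-u^k\rangle \ge 0$, i.e. cyclic monotonicity of $\partial J$; for $J(u)=\mu\|u\|_1$ this is standard, but one should state it for the general convex $J$ used in (\ref{2.5}), or else argue directly from the shrinkage form (\ref{2.6.2}) that $\mathrm{shrink}(\cdot,\mu)$ is a monotone (firmly nonexpansive) map. A secondary bookkeeping issue is making sure the optimality condition for (\ref{2.5}) is written with the correct sign so that the $p^{k+1}$ update in (\ref{2.5}) is literally the subgradient inclusion; once that is pinned down the rest is the routine completion-of-square estimate above.
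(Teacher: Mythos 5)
Your proof is correct, but it is a genuinely different argument from the one in the paper. The paper exploits the specific $\ell_1$/shrinkage structure: it writes the componentwise update as $\Delta u_i^k=\delta q_i^k\Delta v_i^k$ with a diagonal matrix $0\preceq Q^k\preceq I$ (equation (\ref{3.1})), which yields the exact recursion $Au^{k+1}-f=(I-\delta AQ^kA^T)(Au^k-f)$ in (\ref{3.3}); the step-size condition makes $-I\prec I-\delta AQ^kA^T\preceq I$, so the residual cannot increase, and a short separate argument shows $AQ^kA^T(Au^k-f)=0$ would force $u^{k+1}=u^k$, giving strictness. You instead expand $\|r^{k+1}\|^2$ and control the cross term via monotonicity of $\partial J$ applied to the optimality condition of (\ref{2.5}); your sign bookkeeping is indeed right, since the first-order condition for the minimization in (\ref{2.5}) is exactly $p^k-\tfrac{1}{\delta}(u^{k+1}-u^k)-A^T(Au^k-f)\in\partial J(u^{k+1})$, i.e.\ the $p^{k+1}$ update is literally the subgradient inclusion, and monotonicity of the subdifferential of a convex $J$ needs no extra hypotheses (for $J=\mu\|\cdot\|_1$ it is also immediate from monotonicity of the shrink map, as you note). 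What each approach buys: yours is more general (it works for any convex $J$ in (\ref{2.5}), not just $\ell_1$) and is slightly more quantitative, giving $\|Au^k-f\|^2-\|Au^{k+1}-f\|^2\geq\bigl(\tfrac{2}{\delta}-\|AA^T\|\bigr)\|u^{k+1}-u^k\|^2$; the paper's construction of $Q^k$ is not just for this theorem, however --- the same identity (\ref{3.6}) is reused in Theorem \ref{thm4} to obtain exponential decay of the residual while the sign pattern of $u^k$ is fixed, so the structural route pays off downstream in a way your softer convexity argument does not.
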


\begin{proof}  Let
\begin{displaymath}
u^{k+1} - u^k = \Delta u^k, \ v^{k+1} - v^k = \Delta v^k.
\end{displaymath}
Then the shrinkage operation is such that
\begin{equation}
\Delta u_i^k = \delta q_i^k \Delta v_i^{k} \label{3.1}
\end{equation}
with
\begin{align*}
q_i^k \begin{cases}=1&\text{if} \ u_i^{k+1} u_i^k > 0  \\
=0 &\text{if} \ u_i^{k+1} = u_i^k = 0 \\
\in (0,1] &\text{otherwise}
\end{cases}
\end{align*}
Let $Q^k = \text{Diag}~ (q_i^k)$. Then (\ref{3.1}) can be written
as
\begin{equation}
\Delta u^k = \delta Q^k \Delta v^{k} = \delta Q^k A^T(f-Au^k)
\label{3.2}
\end{equation}
which implies
\begin{equation}
Au^{k+1}-f = (I-\delta A Q^kA^T)(Au^k-f). \label{3.3}
\end{equation}

From (\ref{3.1}), $Q^k$ is diagonal with $0 \preceq Q^k \preceq
I$, so $0 \preceq AQ^kA^T \preceq AA^T$. If we choose $\delta>0$
such that $\delta AA^T \prec 2I$, then $0 \preceq \delta A Q^kA^T
\prec 2I$ or $-I \prec I-\delta AQ^kA^T \preceq I$ which implies
that $\|Au^k-f\|$ is not increasing. To get strict decay, we need
only show that $AQ^kA^T(Au^k-f) = 0$ is impossible if
 $u^{k+1} \not= u^k$. Suppose $AQ^kA^T(Au^k-f) = 0$ holds, then from (\ref{3.2}) we have:
\begin{displaymath}
\langle \Delta u^k, \Delta v^{k}\rangle = \delta \langle
A^T(f-Au^k), Q^k A^T(f-Au^k)\rangle=0.
\end{displaymath}
By (\ref{3.1}), this only happens if $u_i^{k+1} = u_i^k$ for all
$i$, which is a contradiction. \end{proof}

We are still faced with estimating how fast the residual decays.
It turns out that if consecutive elements of $u$ do not change
sign, then $\|Au-f\|$ decays exponentially. By 'exponential' we
mean that the ratio of the residuals of two consecutive iteration
converges to a constant, this type of convergence is sometimes
called linear convergence.  Here we define
\begin{equation}
S_u = \{x \in  R^n : \text{sign} (x_i) = \text{sign} (u_i),
\forall i\} \label{3.4}
\end{equation}
(where $\text{sign}(0)=0$ and $\text{sign}(a)=a/|a|$ for $a\neq
0$). Then we have the following:

\begin{theorem}\label{thm4}
If $u^k \in  S \equiv S_{u_{k}}$ for $k \in( T_1,T_2)$, then $u^k$
converges to $u^*$, where $u^* \in \arg\min \{\|Au-f\|^2:u \in
S\}$ and  $\|Au^k-f\|^2$ decays to $\|Au^{*}-f\|^2$ exponentially.
\end{theorem}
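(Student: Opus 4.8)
The plan is to exploit the iteration formula (3.3), which on the sign-stable window $(T_1,T_2)$ becomes an \emph{affine} recursion with a fixed linear part. First I would fix the active set $I = \{i : u_i^k \neq 0\}$ — note that on $S \equiv S_{u_k}$ the support cannot change (a sign-stable $x$ has $x_i = 0$ exactly when $u_i = 0$), so the diagonal matrices $Q^k$ all satisfy $Q^k = D$ where $D$ is the $0$--$1$ projection onto coordinates in $I$ (the ambiguous case $q_i^k \in (0,1]$ occurs only when $u_i^{k+1}$ and $u_i^k$ have opposite signs or one is zero, which sign-stability forbids). Hence (3.3) reads $Au^{k+1}-f = M(Au^k-f)$ with the \emph{constant} matrix $M = I - \delta A D A^T$, and by Theorem~\ref{thm3} (plus $0 \prec \delta AA^T \prec 2I$) we have $-I \prec M \preceq I$, with eigenvalue $1$ possible only on $\ker(ADA^T)^{\perp}$... more precisely on $\ker(A D A^T) = \ker(DA^T)$, the orthogonal complement of $\mathrm{range}(AD)$.

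Next I would split $R^m = V_1 \oplus V_0$ where $V_1 = \mathrm{range}(AD)$ (equivalently $\mathrm{range}(ADA^T)$, since $ADA^T = (AD^{1/2})(AD^{1/2})^T$ has the same range as $AD^{1/2}$, hence as $AD$) and $V_0 = \ker(DA^T)$. On $V_1$ the map $M$ restricts to a symmetric operator with spectrum in the open interval $(-1,1)$, so its spectral radius there is some $\rho < 1$; on $V_0$, $M$ acts as the identity. Writing $r^k = Au^k - f = r_1^k + r_0^k$ accordingly, we get $r_0^k \equiv r_0^{T_1+1} =: r_0^*$ constant and $\|r_1^{k+1}\| \leq \rho \|r_1^k\|$, so $r_1^k \to 0$ geometrically and $\|Au^k - f\|^2 = \|r_1^k\|^2 + \|r_0^*\|^2 \to \|r_0^*\|^2$ with the error decaying like $\rho^{2k}$ — this is the claimed exponential (linear) convergence of the residual. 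For the ratio-convergence phrasing, one notes $\|r^k\|^2 - \|r_0^*\|^2 = \|r_1^k\|^2$ and the ratio of successive such quantities tends to the square of the dominant subunit eigenvalue of $M|_{V_1}$.

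It remains to identify the limit $u^*$ and show $u^k$ itself (not just the residual) converges. For the residual's limit: $r_0^* \in \ker(DA^T)$ means $DA^T r_0^* = 0$, i.e. $A^T r_0^*$ vanishes on the support $I$; since $u^*$ (if it exists) is supported in $I$, this says $\langle u - u^*, A^T(Au^* - f)\rangle = 0$ for all $u \in S$ with the same support, which is exactly the optimality condition for $\min\{\|Au-f\|^2 : u \in S\}$ restricted to that support — and one checks the minimizer over the (relatively open) cone $S$ lies in its closure and is characterized this way. For convergence of $u^k$: from $u^{k+1} = \delta\,\mathrm{shrink}(v^{k+1},\mu)$ and $v^{k+1} - v^k = A^T(f - Au^k) = -A^T r^k \to -A^T r_0^*$, the increments $v^{k+1}-v^k$ are summable on the support coordinates precisely because $r_1^k \to 0$ geometrically while the $V_0$-part contributes $-A^T r_0^*$ which is zero on $I$; hence $v_I^k$ converges, and on the support the shrink map is affine ($v \mapsto \delta(v - \mu\,\mathrm{sign})$), so $u_I^k$ converges; off the support $u_i^k = 0$ throughout. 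The main obstacle I anticipate is the bookkeeping around the \emph{relatively open} set $S$: one must argue carefully that the constrained minimizer of $\|Au-f\|^2$ over the sign-stable cone $S$ is attained (or attained in $\bar S$) and that the limit point still lies in $S$ — equivalently, that no coordinate of $u^k$ drifts to $0$ in the limit without the sign condition having been violated first. If coordinates were allowed to vanish in the limit, $u^*$ would live on a face of $\bar S$ and the "minimizer over $S$" statement needs the mild convention that we minimize over $\bar S$ but the optimality conditions still pin down $Au^*$; handling this cleanly (rather than the geometric-decay estimate, which is routine once $M$ is seen to be constant) is where the real care goes.
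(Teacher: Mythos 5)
Your proposal follows essentially the same route as the paper: on the sign-stable window the matrix $Q^k$ becomes a constant $0$--$1$ projection, the residual obeys the affine recursion $Au^{k+1}-f=(I-\delta AQA^T)(Au^k-f)$, one splits the residual orthogonally into a fixed null-space component and a geometrically contracted component, and the limit is identified via $QA^T(Au^*-f)=0$. The only cosmetic differences are that the paper gets convergence of $u^k$ directly from $\Delta u^k=\delta QA^Tw^{k,1}$ decaying geometrically (Cauchy argument) rather than through the $v$-iterates and the affineness of shrink on the support, and your caution about $S$ being relatively open (the limit possibly landing on a face of $\bar S$) is a genuine subtlety the paper simply glosses over.
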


\begin{proof}. Since $u^k \in  S$ for $k \in  [T_1,T_2]$, we can
define $Q \equiv Q^k$ for $T_1 \leq k \leq T_2 - 1$. From
(\ref{3.1}) we see that $Q^k$ is a diagonal matrix consisting of
zeros or ones, so $Q = Q^TQ$. Moreover, it is easy to see that $S
= \{x | Qx=x\}$.

Following the argument in Theorem \ref{thm3} we have:
\begin{eqnarray}
&u^{k+1} - u^k =\Delta u^k = \delta Q \Delta v^{k} =
\delta Q A^T (f-Au^k) \label{3.5} \\
&Au^{k+1}-f =[I - \delta AQA^T] (Au^k-f) \label{3.6}
\end{eqnarray} and\[
 -I \prec I-\delta AQA^T \preceq I.
\]

Let $R^n = V_0 \oplus V_1$, where $V_0$ is the null space of
$AQA^T$ and $V_1$ is spanned by the eigenvectors corresponding to
the nonzero eigenvalues of $AQA^T$. Let $Au^k-f = w^{k,0} +
w^{k,1}$, where $w^{k,j} \in V_j$ for $j=0,1$. From (\ref{3.6}) we
have
\begin{eqnarray}
w^{k+1,0} &=& w^{k,0} \nonumber \\
w^{k+1,1} &=& [I-\delta AQA^T] w^{k,1} \nonumber
\end{eqnarray}
for $T_1 \leq k \leq T_2 - 1$. Since $w^{k,1}$ is not in the null
space of $AQA^T$, then (\ref{3.5}) and (\ref{3.6}) imply that
$\|w^{k,1}\|$ decays exponentially. Let $w^0 = w^{k,0}$, then
$AQA^T w^0 = 0$ $AQQA^Tw^0 \Rightarrow QA^T w^0 = 0$. Therefore,
from (\ref{3.5}) we have
\begin{displaymath}
\Delta u^k = \delta Q^TA^T(f-Au^k) = \delta QA^T(w^0+w^{k,1}) =
\delta QA^Tw^{k,1}.
\end{displaymath}
Thus $\|\Delta u^k\|$ decays exponentially. This means $\{u^k\}$
forms a Cauchy sequence in $S$, so it has  a limit $u^* \in S$.
Moreover
\begin{displaymath}
Au^*-f = \lim_k (Au^k-f) = \lim_k w^{k,0} + \lim_k w^{k,1} = w^0.
\end{displaymath}
Since $V_0$ and $V_1$ are orthogonal:
\begin{displaymath}
\|Au^k-f\|^2 = \|w^{k,0}\|^2 + \|w^{k,1}\|^2 = \|Au^* - f\|^2 +
\|w^{k,1}\|^2,
\end{displaymath}
so $\|Au^k-f\|^2 - \|Au^*-f\|^2$ decays exponentially. The only
thing left to show is that
\begin{displaymath}
u^* = \arg\min (\|Au-f\|^2: u\in  S) = \arg\min \{\|Au-f\|^2:
Qu=u\}.
\end{displaymath}
This is equivalent to way that $A^T(Au^*-f)$ is orthogonal with
the hyperspace $\{u:Qu=u\}$. It's easy to see that since $Q$ is a
projection operator, a vector $v$ is orthogonal with $\{u:Qu=u\}$
if and only if $Qv=0$, thus we need to show $QA^T(Au^*-f)=0$. This
is obvious because we have shown that $Au^* - f = w^0$ and
$QA^Tw^0 = 0$. So we find that $u^*$ is the desired minimizer.
\end{proof}

Therefore, instead of decaying exponentially with a global rate,
the residual of the linearized Bregman iteration decays in a
rather sophisticated manner. From the definition of the shrinkage
function 
we can see that the sign of an element of $u$ will change if and
only if the corresponding element of $v$ crosses the boundary of
the interval $[-\mu,\mu]$. If $\mu$ is relatively large compared
with the size of $\Delta v$ (which is usually the case when
applying the algorithm to a compressed sensing problem), then at
most iterations the signs of the elements of $u$ will stay
unchanged, i.e. $u$ will stay in the subspace $S_u$ defined in
(\ref{3.4}) for a long while. This theorem tells us that under
this scenario $u$ will quickly converge to the point $u^*$ that
minimizes $\|Au-f\|$ inside $S_u$, and the difference between
$\|Au-f\|$ and $\|Au^*-f\|$ decays exponentially. After $u$
converges to $u^*$, $u$ will stay there until the sign of some
element of $u$ changes. Usually this means that a new nonzero
element of $u$ comes up.  After that, $u$ will enter a different
subspace $S$ and a new converging procedure begins.

The phenomenon described above can be observed clearly in Fig
\ref{Fig:withoutkicking}. The final solution of $u$ contains five
non-zero spikes. Each time a new spike appears, it converges
rapidly to the position that minimizes $\|Au-f\|$ in the subspace
$S_u$. After that there is a long stagnation, which means $u$ is
just waiting there until the accumulating $v$ brings out a new
non-zero element of $u$. The larger $\mu$ is, the longer the
stagnation takes. Although the convergence of the residual during
each phase is fast, the total speed of the convergence suffers
much from the stagnation. The solution of this problem will be
described in the next section.

\begin{figure}[ht]
    \centering
    \includegraphics[width=3.2in]{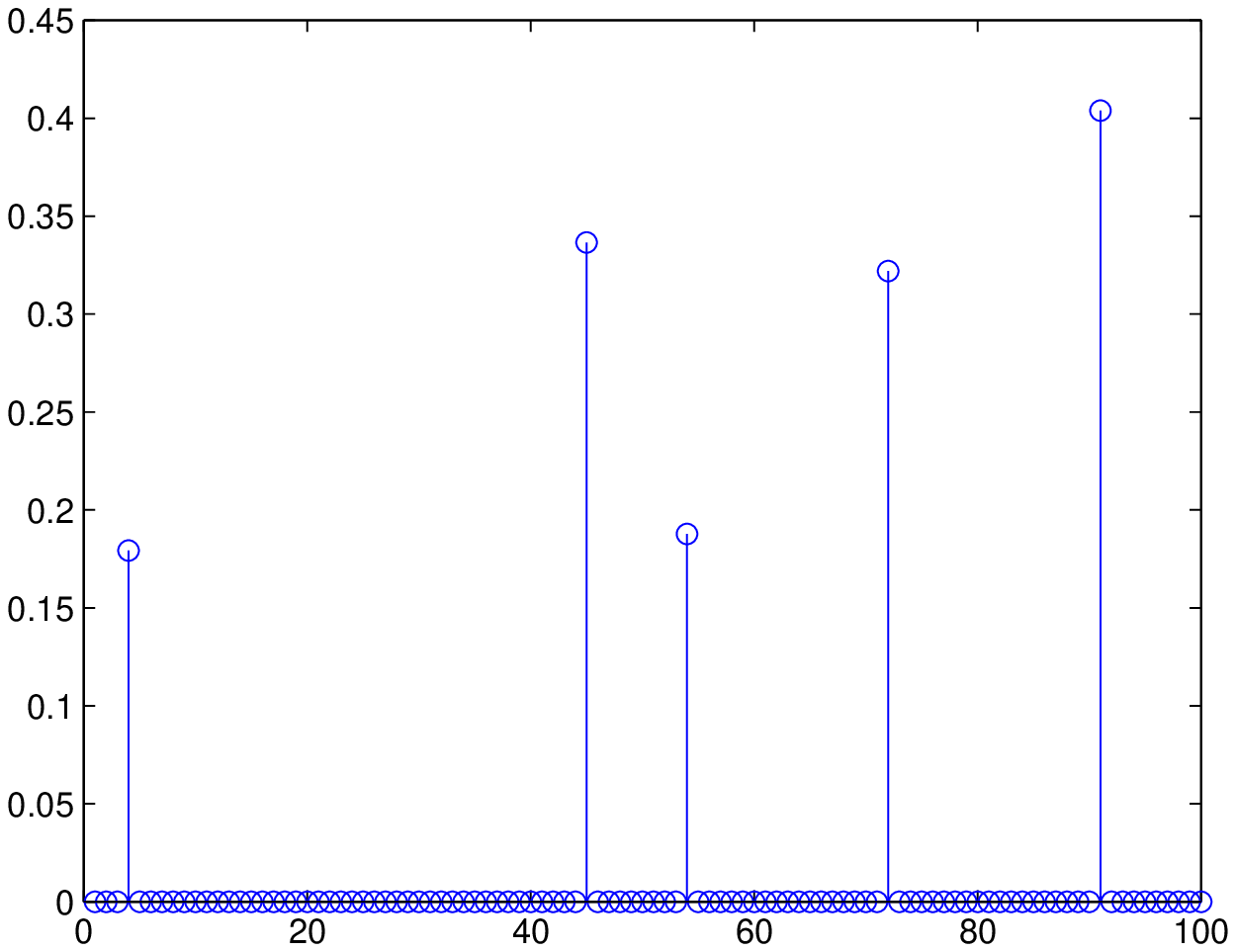}\includegraphics[width=3.0in]{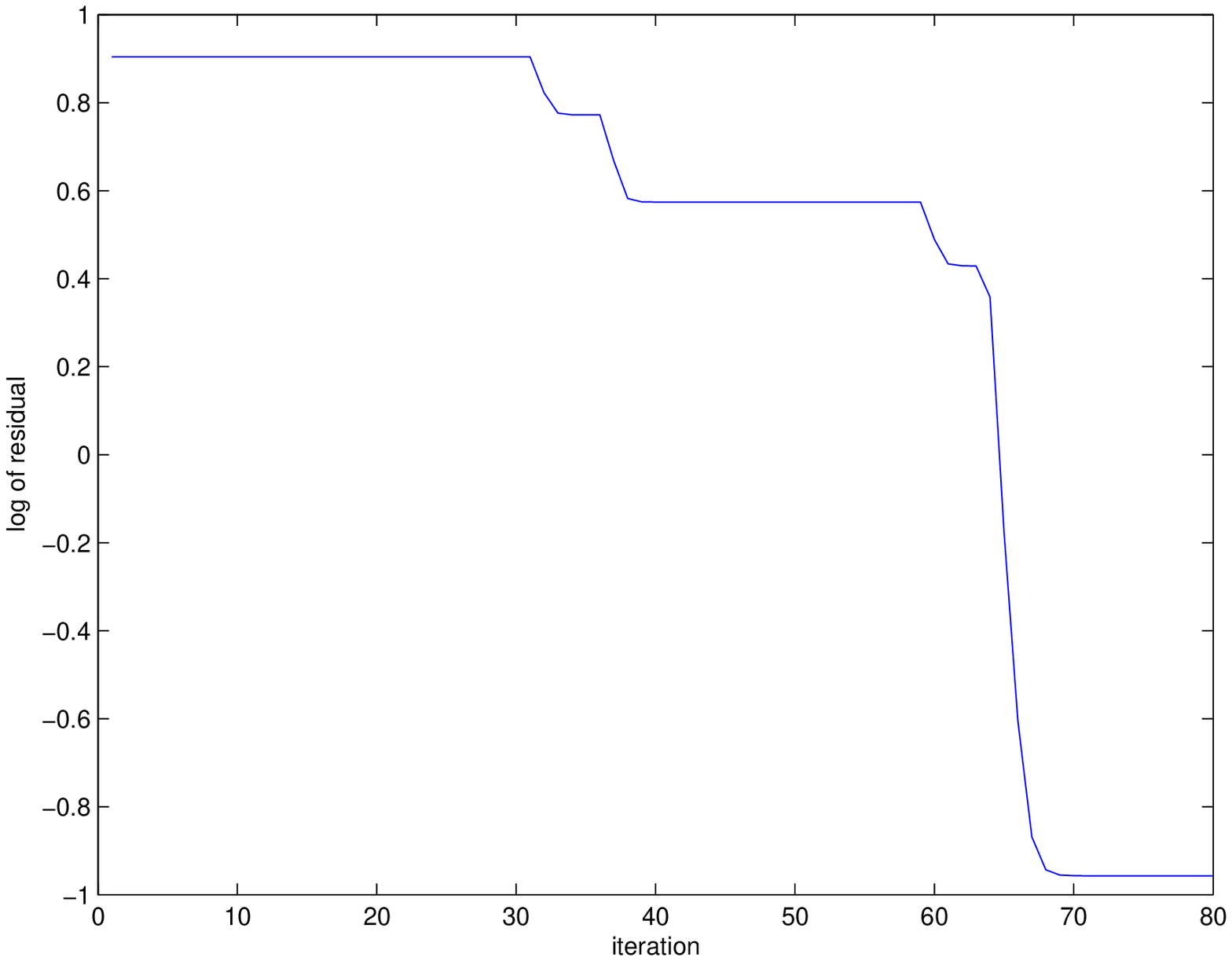}
    \caption{The left figure presents a simple signal with 5 non-zero spikes. The right figure shows how the linearized
    Bregman iteration converges.
    }\label{Fig:withoutkicking}
\end{figure}

\section{Fast Implementation}

The iterative formula in Algorithm \ref{Breg_Alg} below gives us
the basic linearized Bregman algorithm designed to solve
(\ref{1.1}),(\ref{1.2}).

\begin{algorithm}
\caption{Bregman Iterative Regularization\label{Breg_Alg}}
\begin{algorithmic}[Linearized Bregman Iteration]
\STATE Initialize: $u=0$, $v=0$.

\WHILE{``$\|f-Au\|$ not converge"}

\STATE $v^{k+1}=v^k + A^\top (f-Au^k)$

\STATE $u^{k+1}=\delta\cdot \text{shrink}(v^{k+1},\mu)$

\ENDWHILE
\end{algorithmic}
\end{algorithm}

This is an extremely concise algorithm,  simple to program,
involve only matrix multiplication and shrinkage. When $A$
consists of rows of a matrix of a fast transform like FFT which is
a common case for compressed sensing, it is even faster because
matrix multiplication
 can be implemented efficiently using the existing fast
code of the transform. Also, storage becomes a less serious issue.

We now consider how we can accelerate the algorithm under the
problem of stagnation described  in the previous section. From
that discussion, during a stagnation $u$ converges to a limit
$u^*$
 so we will have $u^{k+1}\approx u^{k+2}\approx\cdots\approx u^{k+m}\approx
u^*$ for some $m$. Therefore the increment of $v$ in each step,
$A^\top(f-Au)$, is fixed. This implies that during the stagnation
$u$ and $v$ can be calculated explicitly as following
\begin{equation}
\begin{cases}\label{stagnation}
u^{k+j}\equiv u^{k+1}\\
 v^{k+j}=v^k+j\cdot A^\top(f-Au^{k+1})
\end{cases}j=1,\cdots,m
\end{equation}
If we denote the set of indices of the zero elements of $u^*$ as
$I_0$ and let $I_1=\overline{I_0}$ be the support of $u^*$, then
$v^k_i$ will keep changing only for $i\in I_0$ and the iteration
can be formulated entry-wise as:
\begin{equation}
\begin{cases}\label{entrywisestagnation}
u^{k+j}_i\equiv u^{k+1}_i&\forall i\\
 v^{k+j}_i=v^k_i+j\cdot(A^\top(f-Au^{k+1}))_i &i\in I_0
\\
 v^{k+j}_i\equiv v^{k+1}_i&i\in I_1
\end{cases}
\end{equation}
for $j=1,\cdots,m$. The stagnation will end when $u$ begins to
change again. This happens if and only if some
 element of $v$ in $I_0$ (which keeps changing during the stagnation)
crosses the boundary of the
 interval $[-\mu,\mu]$. When $i\in I_0$, $v^k_i\in[-\mu,\mu]$, so
  we can estimate  the number of the steps needed for
 $v_i^k$ to cross the boundary $\forall i\in I_0$ from
\eqref{entrywisestagnation}, which
 is
\begin{equation}\label{entrywisestep}
 s_i=\left\lceil\dfrac{\mu\cdot\text{sign}((A^\top(f-Au^{k+1}))_i)-v^{k+1}_i}{(A^\top(f-Au^{k+1}))_i}\right\rceil\,\forall i\in I_0
\end{equation}and \begin{equation}\label{kickingstep}
s=\min_{i\in I_0}\{s_i\}\end{equation} is the number of steps
needed. Therefore, $s$ is nothing but the length of the
stagnation. Using \eqref{stagnation}, we can predict the end
status of the stagnation by\begin{equation}
\begin{cases}\label{kicking}
u^{k+s}\equiv u^{k+1}\\
 v^{k+s}=v^k+s\cdot A^\top(f-Au^{k+1})
\end{cases}j=1,\cdots,m
\end{equation}
Therefore, we can \emph{kick} $u$ to the critical point of the
stagnation when we detect that $u$ has been staying unchanged for
a while. Specifically, we have the following algorithm: Algorithm
 \ref{Breg_Alg_Kick}.
\begin{algorithm}
\caption{Linearized Bregman Iteration with
Kicking\label{Breg_Alg_Kick}}
\begin{algorithmic}[Linearized Bregman Iteration with Kicking]
\STATE Initialize: $u=0$, $v=0$.

\WHILE{``$\|f-Au\|$ not converge"}

\IF{``$u^{k-1}\approx u^k$"}

\STATE calculate $s$ from \eqref{entrywisestep} and
\eqref{kickingstep}

\STATE $v^{k+1}_i=v^k_i+s\cdot (A^\top(f-Au^k))_i$, $\forall i \in
I_0$

\STATE $v^{k+1}_i=v^k_i$, $\forall i \in I_1$

\ELSE

\STATE $v^{k+1}=v^k+ A^\top (f-Au^{k})$

\ENDIF

\STATE $u^{k+1}=\delta\cdot\text{shrink}(v^{k+1},\mu)$

\ENDWHILE
\end{algorithmic}
\end{algorithm}

\begin{figure}[ht]
    \centering
    \includegraphics[width=3.0in]{without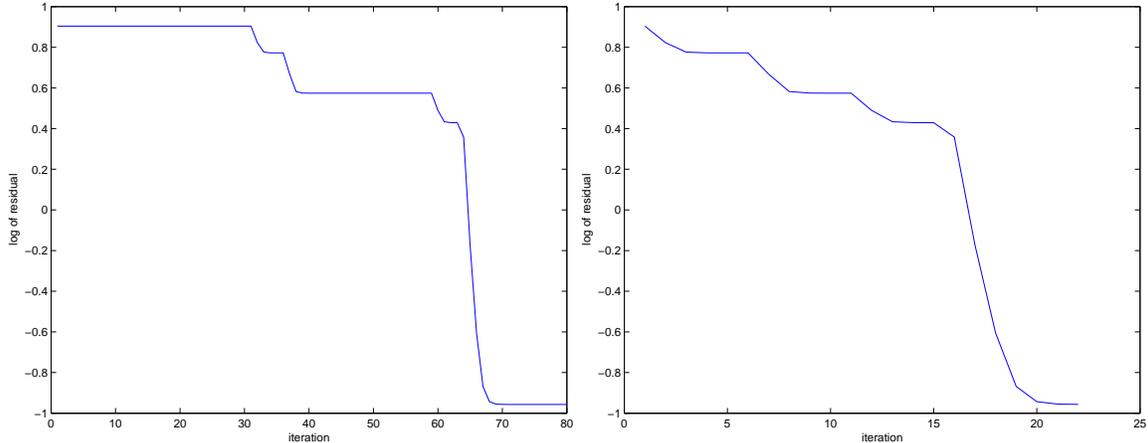}\includegraphics[width=3.0in]{kicking.eps}
    \caption{The left figure presents the convergence curve of
the original linearized
    Bregman iteration  using the same signal as Fig
    \ref{Fig:withoutkicking}. The right figure shows the convergence
curve of the  linearized
    Bregman iteration with the kicking modification.
    }\label{Fig:kicking}
\end{figure}
Indeed, this kicking procedure is similar to line search commonly
used in optimization problems and modifies the initial algorithm
in no way but just accelerates the speed. More precisely, note
that the output sequence $\{u^k, v^k\}$ is a subsequence of the
original one, so all the previous theoretical conclusions on
convergence still hold here.

An example of the algorithm is shown in Fig \ref{Fig:kicking}. It
is clear that all the stagnation in the original convergence
collapses to single steps. The total amount of computation is
reduced dramatically.

\section{Numerical Results}
In this section, we demonstrate the effectiveness of the algorithm
(with kicking) in solving basis pursuit and some related problems.

\subsection{Efficiency}

Consider the constrained minimization problem
$$\min|u|_{1}\quad
\mbox{s.t. } Au=f,$$ where the constraints $Au=f$ are
under-determined linear equations with $A$ an $m\times n$ matrix,
and $f$ generated from a sparse signal $\bar{u}$ that has a number
of nonzeros $\kappa<m$.

Our numerical experiments use two types of $A$ matrices: Gaussian
matrices whose elements were generated from i.i.d. normal
distributions $\mathcal{N}(0,1)$ (\textbf{randn(m,n)} in MATLAB),
and partial discrete cosine transform (DCT) matrices whose $k$
rows were chosen randomly from the $n\times n$ DCT matrix. These
matrices are known to be efficient for compressed sensing. The
number of rows $m$ is chosen as $m\sim \kappa\log(n/\kappa)$ for
Gaussian matrices and $m\sim \kappa\log n$ for DCT matrices
(following \cite{CRT}  ).

The tested \emph{original sparse signals} $\bar{u}$ had numbers of
nonzeros equal to $0.05n$ and $0.02n$ rounded to the nearest
integers in two sets of experiments, which were obtained by
\textbf{round(0.05*n)} and \textbf{round(0.02*n)} in MATLAB,
respectively. Given a sparsity $\|\bar{u}\|_0$, i.e., the number
of nonzeros, an \emph{original sparse signal}
$\bar{u}\in\mathbb{R}^n$ was generated by randomly selecting the
locations of $\|\bar{u}\|_0$ nonzeros, and sampling each of these
nonzero elements from $\mathcal{U}(-1,1)$ (\textbf{2*(rand-0.5)}
in MATLAB). Then, $f$ was computed as $A\bar{u}$. When
$\|\bar{u}\|_0$ is small enough, we expect the basis pursuit
problem, which we solved using our fast algorithm, to yield a
solution $u^*=\bar{u}$ from the inputs $A$ and $f$.

Note that partial DCT matrices are implicitly stored fast
transforms for which matrix-vector multiplications in the forms of
$Ax$ and $A^\top x$ were computed by the MATLAB commands
\textbf{dct(x)} and \textbf{idct(x)}, respectively. Therefore, we
were able to test on partial DCT matrices of much larger sizes
than Gaussian matrices. The sizes $m$-by-$n$ of these matrices are
given in the first two columns of Table \ref{tb:tol_e-5}.

Our code was written in MATLAB and was run on a Windows PC with a
Intel(R) Core(TM) 2 Duo 2.0GHz CPU and 2GB memory. The MATLAB
version is 7.4.

The set of computational results given in Table \ref{tb:tol_e-5}
was obtained by using the stopping criterion
\begin{equation}\label{tol_e-5} \frac{\|Au^k-f\|}{\|f\|}<10^{-5},
\end{equation} which was sufficient to give a small error
$\|u^k-\bar{u}\|/\|\bar{u}\|$. Throughout our experiments in Table
\ref{tb:tol_e-5}, we used $\mu=1$ to ensure the correctness of the
results.


\begin{table}[ht]
\caption{Experiment results using 10 random instances for each
configuration of $(m,n,\|\bar{u}\|_0)$, with nonzero elements of
$\bar u$ come from $\mathcal{U}(-1,1)$.} \label{tb:tol_e-5}
\vspace{10pt}
\begin{center}
\begin{tabular}{r|r||c|c|c||c|c|c||c|c|c}\hline\hline
\multicolumn{11}{c}{Results of linearized Bregman-$L_1$ with
kicking}\\\hline \multicolumn{3}{l|}{Stopping tolerance.} &
\multicolumn{8}{l}{$\|Au^k-f\|/\|f\|<10^{-5}$}\\\hline \hline
\multicolumn{2}{c||}{ } & \multicolumn{9}{c}{Gaussian
matrices}\\\hline
 & & \multicolumn{3}{c||}{stopping itr. $k$} & \multicolumn{3}{c||}{relative error $\|u^k-\bar{u}\|/\|\bar{u}\|$} & \multicolumn{3}{c}{time (sec.)} \\ \cline{3-11}
 & & mean & std. & max & mean & std. & max & mean & std. & max \\\cline{3-11}
$n$ & $m$ & \multicolumn{9}{c}{$\|\bar{u}\|_0 = 0.05n$}\\\hline
 1000 & 300 & 422 & 67 & 546 & 2.0e-05 & 4.3e-06 & 2.7e-05 & 0.42 & 0.06 & 0.51 \\
 2000 & 600 & 525 & 57 & 612 & 1.8e-05 & 1.9e-06 & 2.1e-05 & 4.02 & 0.45 & 4.72 \\
 4000 & 1200 & 847 & 91 & 1058 & 1.7e-05 & 1.7e-06 & 1.9e-05 & 25.7 & 2.87 & 32.1 \\ \hline
$n$ & $m$ & \multicolumn{9}{c}{$\|\bar{u}\|_0 = 0.02n$}\\\hline
 1000 & 156 & 452 & 98 & 607 & 2.3e-05 & 2.6e-06 & 2.6e-05 & 0.24 & 0.06& 0.33 \\
 2000 & 312 & 377 & 91 & 602 & 2.0e-05 & 4.0e-06 & 2.9e-05 & 1.45 & 0.38 & 2.37 \\
 4000 & 468 & 426 & 30 & 477 & 1.6e-05 & 2.1e-06 & 2.0e-05 & 6.96 & 0.51 & 7.94 \\ \hline
\hline \multicolumn{2}{c||}{ } &\multicolumn{9}{c}{Partial DCT
matrices}\\\hline $n$ & $m$ &  \multicolumn{9}{c}{$\|\bar{u}\|_0 =
0.05n$}\\\hline
 4000 & 2000 & 71 & 6.6 & 82 & 9.1e-06 & 2.5e-06 & 1.2e-05 & 0.43 & 0.06 & 0.56\\
 20000 & 10000 & 158 & 14.5 & 186 & 6.2e-06 & 2.1e-06 & 1.1e-05 & 3.95 & 0.36 & 4.73 \\
 50000 & 25000 & 276 & 14 & 296 & 6.8e-06 & 2.6e-06 & 1.0e-05 & 17.6 & 0.99 & 19.2\\ \hline
$n$ & $m$ & \multicolumn{9}{c}{$\|\bar{u}\|_0 = 0.02n$}\\\hline
 4000 & 1327 & 52 & 7.0 & 64 & 8.6e-06 & 1.3e-06 & 1.1e-05 & 0.27 & 0.04 & 0.35 \\
 20000 & 7923 & 91 & 10.3 & 115 & 7.2e-06 & 2.2e-06 & 1.1e-05 & 2.36 & 0.30 & 3.02\\
 50000 & 21640 & 140 & 9.7 & 153 & 5.9e-06 & 2.4e-06 & 1.1e-05 & 8.53 & 0.66 & 9.42\\ \hline
\end{tabular}
\end{center}
\end{table}

\subsection{Robustness to Noise}\label{subs:rubost}

In real applications, the measurement $f$ we obtain is usually
contaminated by noise. The measurement we have is:
$$\tilde f=f+n=A\bar u+n,\quad n\in
\mathcal{N}(0,\sigma).$$ To characterize the noise level, we shall
use SNR (signal to noise ratio) instead of $\sigma$ itself. The
SNR is defined as follows
$$SNR(u):=20\log_{10}(\frac{\|\bar
u\|}{\|n\|}).$$ In this section we test our algorithm on
recovering the true signal $\bar u$ from $A$ and the noisy
measurement $\tilde f$. As in the last section, the nonzero
entries of $\bar u$ are generated from $\mathcal{U}(-1,1)$, and
$A$ is either a Gaussian random matrix or a partial DCT matrix.
Our stopping criteria is given by $$\mbox{std}\big(Au^{k}-\tilde
f\big)<\sigma,\quad\mbox{and}\quad\mbox{Iter.}<1000,$$ i.e. we
stop whenever the standard deviation of residual $Au^k-\tilde f$
is less than $\sigma$ or the number of iterations exceeds $1000$.
Table \ref{tb:noisy} shows numerical results for different noise
level, size of $A$ and sparsity. We also show one typical result
for a partial DCT matrix with size $n=4000$ and $\|\bar
u\|_0=0.02n=80$ in Figure \ref{Fig:noisyexample}.

\begin{table}[ht]
\caption{Experiment results using 10 random instances for each
configuration of $(m,n,\|\bar{u}\|_0)$.} \label{tb:noisy}
\vspace{10pt}
\begin{center}
\begin{tabular}{r|r||c|c|c||c|c|c||c|c|c}\hline\hline
\multicolumn{11}{c}{Results of linearized Bregman-$L_1$ with
kicking}\\\hline \multicolumn{3}{l|}{Stopping criteria.} &
\multicolumn{8}{l}{$\mbox{std}(Au^k-f)<\sigma$.}\\\hline \hline
\multicolumn{2}{c||}{ } & \multicolumn{9}{c}{Gaussian
matrices}\\\hline
 & & \multicolumn{3}{c||}{stopping itr. $k$} & \multicolumn{3}{c||}{relative error $\|u^k-\bar{u}\|/\|\bar{u}\|$} & \multicolumn{3}{c}{time (sec.)} \\ \cline{3-11}
 & & mean & std. & max & mean & std. & max & mean & std. & max \\\cline{3-11}
 Avg. SNR& ($n$,$m$) & \multicolumn{9}{c}{$\|\bar{u}\|_0 = 0.05n$}\\\hline
  26.12& (1000,300) & 420 & 95 & 604 & 0.0608 & 0.0138 & 0.0912 & 0.33 & 0.09 & 0.53 \\
  25.44& (2000,600) & 206 & 32 & 253 & 0.0636 & 0.0128 & 0.0896 & 1.49 & 0.22 & 1.79 \\
  26.02& (4000,1200) & 114 & 11 & 132 & 0.0622 & 0.0079 & 0.0738 & 3.32 & 0.31 & 3.81 \\ \hline
 Avg. SNR& ($n$,$m$) & \multicolumn{9}{c}{$\|\bar{u}\|_0 =
0.02n$}\\\hline
  27.48& (1000,156) & 890 & 369 & 1612 & 0.0456 & 0.0085 & 0.0599 & 0.42 & 0.17& 0.73 \\
  25.06& (2000,312) & 404 & 64 & 510 & 0.0638 & 0.0133 & 0.0843 & 1.37 & 0.23 & 1.74 \\
  26.04& (4000,468) & 216 & 35 & 267 & 0.0557 & 0.0068 & 0.0639 & 3.29 & 0.55 & 4.13 \\ \hline
\hline \multicolumn{2}{c||}{ } &\multicolumn{9}{c}{Partial DCT
matrices}\\\hline Avg. SNR& ($n$,$m$) &
\multicolumn{9}{c}{$\|\bar{u}\|_0 = 0.05n$}\\\hline
  23.97& (4000, 2000) & 151 & 9.2 & 170 & 0.0300 & 0.0028 & 0.0332 & 0.94 & 0.07 & 1.03\\
  24.00& (20000,10000) & 250 & 14 & 270 & 0.0300 & 0.0010 & 0.0318 & 7.88 & 0.62 & 8.86 \\
  24.09& (50000,25000) & 274 & 9.9 & 295 & 0.0304 & 0.0082 & 0.0315 & 20.4 & 0.74 & 20.1\\ \hline
Avg. SNR& ($n$,$m$) & \multicolumn{9}{c}{$\|\bar{u}\|_0 =
0.02n$}\\\hline
  24.29& (4000,1327) & 130 & 11 & 157 & 0.0223 & 0.0023 & 0.0253 & 0.79 & 0.08 & 1.00 \\
  24.37& (20000,7923) & 223 & 14 & 257 & 0.0204 & 0.0025 & 0.0242 & 6.89 & 0.53 & 8.15\\
  24.16& (50000,21640) & 283 & 19 & 311 & 0.0193 & 0.0012 & 0.0207 & 21.5 & 1.68 & 24.1\\ \hline
\end{tabular}
\end{center}
\end{table}
\begin{figure}[ht]
    \centering
    \includegraphics[width=5.0in]{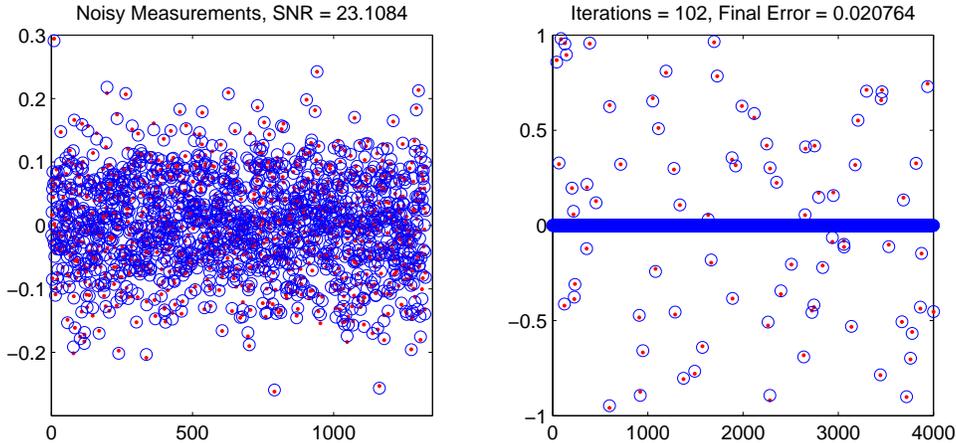}
    \caption{The left figure presents the clean (red dots) and noisy (blue circles) measurements, with
    SNR=23.1084; the right figure shows the reconstructed signal (blue circles) v.s. original signal (red dots),
    where the relative error=0.020764, and number of iterations is 102.}\label{Fig:noisyexample}
\end{figure}

\subsection{Recovery of Signal with High Dynamical Range}
In this section, we test our algorithm on signals with high
dynamical ranges. Precisely speaking, let $\mbox{MAX}=\max\{|\bar
u_i|: i=1,\ldots,n\}$ and $\mbox{MIN}=\min\{|u_i|: u_i\ne0,
i=1,\ldots,n\}$. The signals we shall consider here satisfy
$\frac{\mbox{MAX}}{\mbox{MIN}}\approx 10^{10}$. Our $\bar u$ is
generated by multiplying a random number in $[0,1]$ with another
one randomly picked from $\{1,10,\ldots,10^{10}\}$. Here we adopt
the stopping criteria $$\frac{\|Au^k-f\|}{\|f\|}<10^{-11}$$ for
the case without noise (Figure \ref{Fig:dynamic}) and the same
stopping criteria as in the previous section for the noisy cases
(Figures \ref{Fig:dynamic:noisy1}-\ref{Fig:dynamic:noisy3}). In
the experiments, we take the dimension $n=4000$, the number of
nonzeros of $\bar u$ to be $0.02n$, and $\mu=10^{10}$. Here $\mu$
is chosen to be much larger than before, because the dynamical
range of $\bar u$ is large. Figure \ref{Fig:dynamic} shows results
for the noise free case, where the algorithm converges to a
$10^{-11}$ residual in less than 300 iterations. Figures
\ref{Fig:dynamic:noisy1}-\ref{Fig:dynamic:noisy3} show the cases
with noise (the noise is added the same way as in previous
section). As one can see, if the measurements are contaminated
with less noise, signals with smaller magnitudes will be recovered
well. For example in Figure \ref{Fig:dynamic:noisy1}, the
SNR$\approx 118$, and the entries of magnitudes $10^4$ are well
recovered; in Figure \ref{Fig:dynamic:noisy2}, the SNR$\approx
97$, and the entries of magnitudes $10^5$ are well recovered; and
in Figure \ref{Fig:dynamic:noisy3}, the SNR$\approx 49$, and the
entries of magnitudes $10^7$ are well recovered.

\begin{figure}[ht]
    \centering
    \includegraphics[width=6.0in]{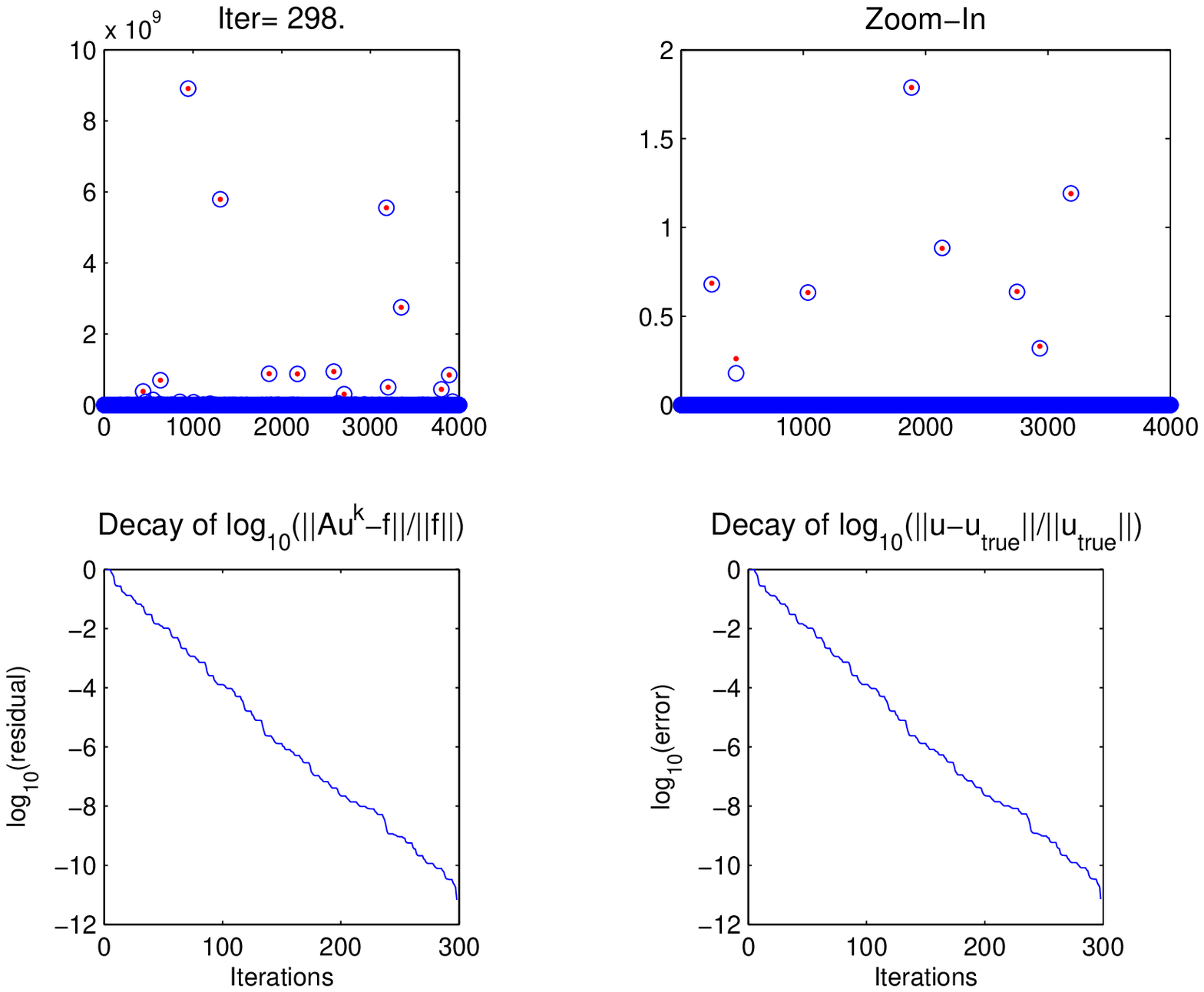}
    \caption{Upper left, true signal (red dots) v.s. recovered signal (blue circle); upper right, one zoom-in
    to the lower magnitudes; lower left, decay of residual $\log_{10}\frac{\|Au^k-f\|}{\|f\|}$; lower right,
    decay of error to true solution $\log_{10}\frac{\|u^k-\bar u\|}{\|\bar
    u\|}$.}\label{Fig:dynamic}
\end{figure}
\begin{figure}
    \centering
    \includegraphics[width=4.5in]{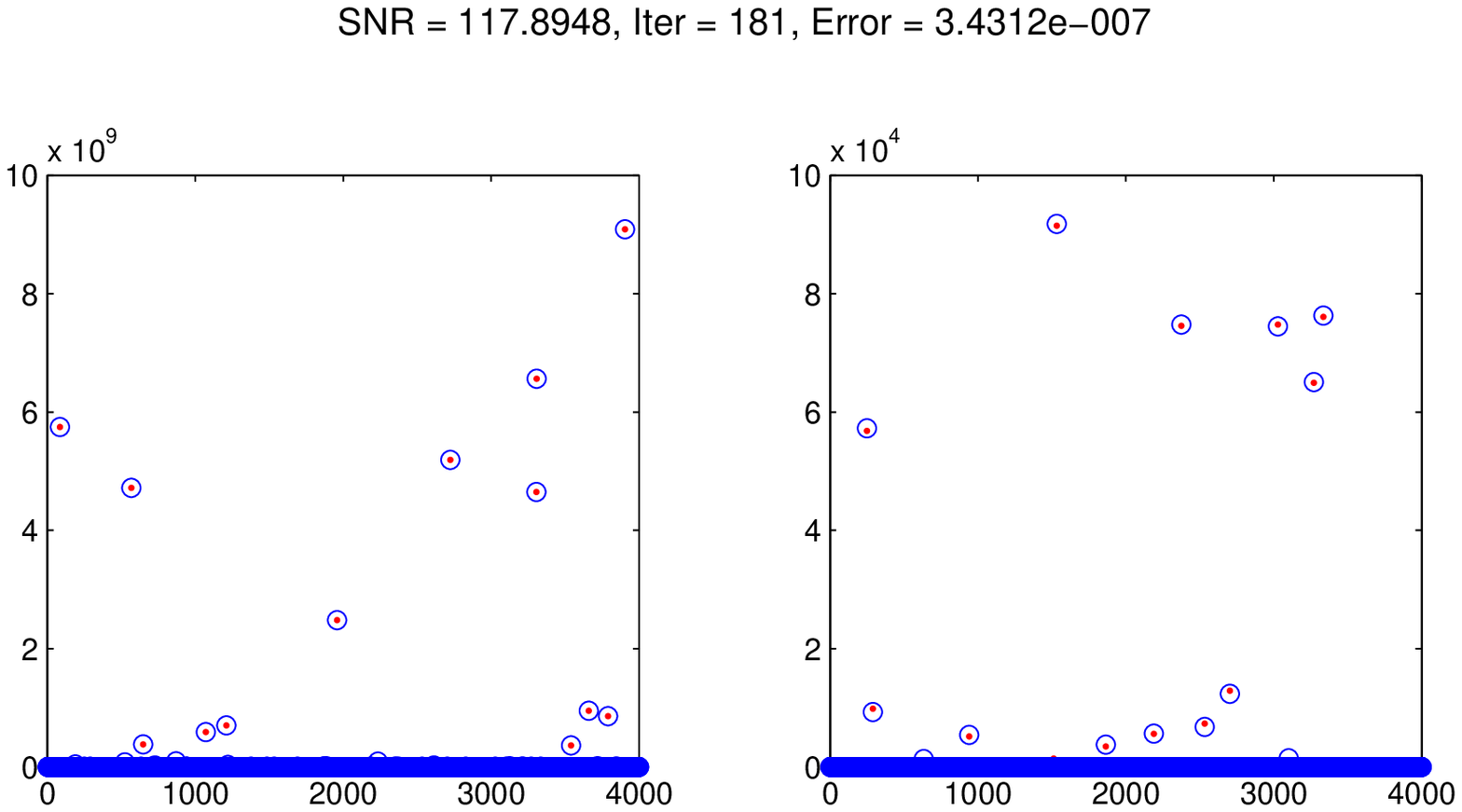}
    \caption{Noisy case. Left figure, true signal (red dots) v.s. recovered signal (blue circle); right figure, one zoom-in
    to the magnitude$\approx 10^5$. The error is measured by $\frac{\|u^k-\bar u\|}{\|\bar
    u\|}$.}\label{Fig:dynamic:noisy1}
\end{figure}
\begin{figure}
    \centering
    \includegraphics[width=4.5in]{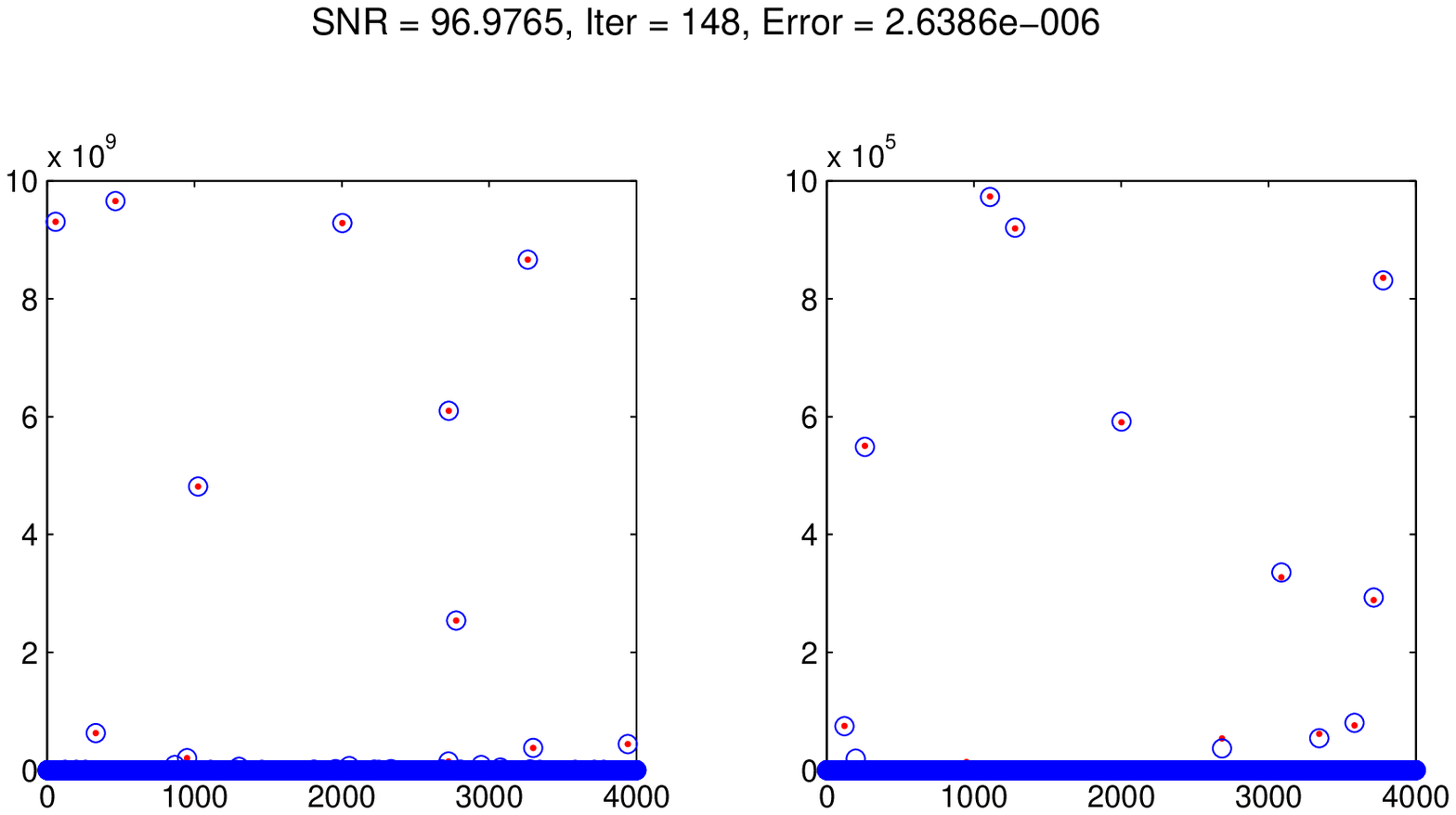}
    \caption{Noisy case. Left figure, true signal (red dots) v.s. recovered signal (blue circle); right figure, one zoom-in
    to the magnitude$\approx 10^6$. The error is measured by $\frac{\|u^k-\bar u\|}{\|\bar
    u\|}$.}\label{Fig:dynamic:noisy2}
\end{figure}
\begin{figure}
    \centering
    \includegraphics[width=4.5in]{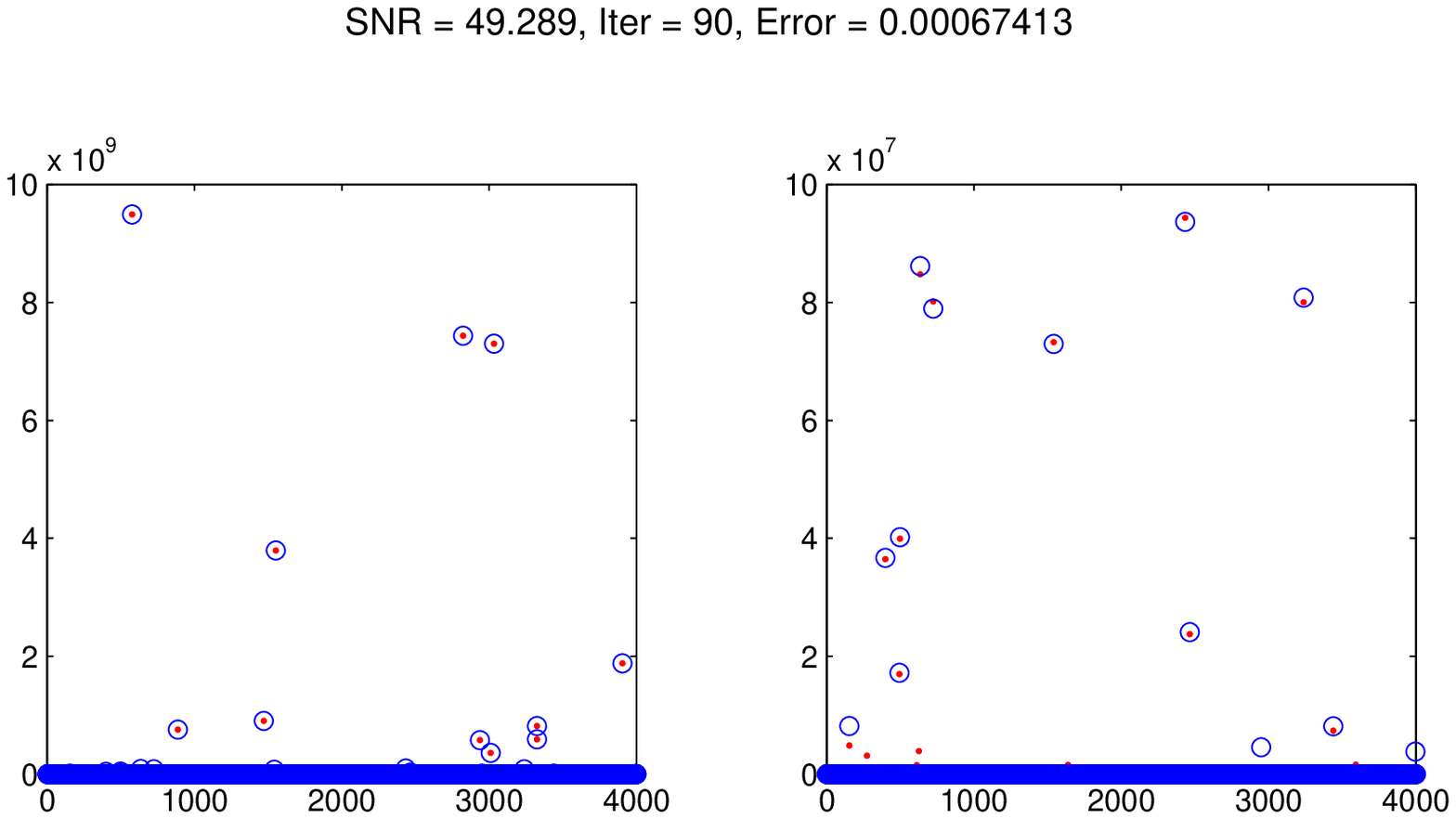}
    \caption{Noisy case. Left figure, true signal (red dots) v.s. recovered signal (blue circle); right figure, one zoom-in
    to the magnitude$\approx 10^8$. The error is measured by $\frac{\|u^k-\bar u\|}{\|\bar
    u\|}$.}\label{Fig:dynamic:noisy3}
\end{figure}

\subsection{Recovery of Sinusoidal Waves in Huge Noise}

In this section we consider $$\bar u(t)=a\sin(\alpha
t)+b\cos(\beta t),$$ where $a,b,\alpha$ and $\beta$ are unknown.
The observed signal $\tilde u$ is noisy and has the form $\tilde
u=\bar u+n$ with $n\sim\mathcal{N}(0,\sigma)$. In practice, the
noise in $\tilde u$ could be huge, i.e. possibly have a negative
SNR, and we may only be able to observe partial information of
$\tilde u$, i.e. only a subset of values of $\tilde u$ is known.
Notice that the signal is sparse (only four spikes) in frequency
domain. Therefore, this is essentially a compressed sensing
problem and $\ell_1$-minimization should work well here. Now the
problem can be stated as reconstructing the original signal $\bar
u$ from \textit{random samples} of the observed signal $\tilde u$
using our fast $\ell_1$-minimization algorithm. In our
experiments, the magnitudes $a$ and $b$ are generated from
$\mathcal{U}(-1,1)$; frequencies $\alpha$ and $\beta$ are random
multiples of $\frac{2\pi}{n}$, i.e. $\alpha=k_1\frac{2\pi}{n}$ and
$\alpha=k_2\frac{2\pi}{n}$, with $k_i$ taken from
$\{0,1,\ldots,n-1\}$ randomly and $n$ denotes the dimension. We
let $I$ be a random subset of $\{1,2,\ldots,n\}$ and $f=\tilde
u(I)$, and take $A$ and $A^\top$ to be the partial matrix of
inverse Fourier matrix and Fourier matrix respectively. Now we
perform our algorithm adopting the same stopping criteria as in
section \ref{subs:rubost}, and obtain a reconstructed signal
denoted as $x$. Notice that reconstructed signal $x$ is in Fourier
the domain, not in the physical domain. Thus we take an inverse
Fourier transform to get the reconstructed signal in physical
domain, denoted as $u^*$. Since we know a priori that our solution
should have four spikes in Fourier domain, before we take the
inverse Fourier transform, we pick the four spikes with largest
magnitudes and set the rest of the entries to be zero. Some
numerical results are given in Figure
\ref{Fig:sinu1}-\ref{Fig:sinu4}. Our experiments show that the
larger the noise level is, the more random samples we need for a
reliable reconstruction, where reliable means that with high
probability ($>$80\%) of getting the frequency back exactly. As
for the magnitudes $a$ and $b$, our algorithm cannot guarantee to
recover them exactly (as one can see in Figure
\ref{Fig:sinu1}-\ref{Fig:sinu4}). However, frequency information
is much more important than magnitudes in the sense that the
reconstructed signal is less sensitive to errors in magnitudes
than errors in frequencies (see bottom figures in Figure
\ref{Fig:sinu1}-\ref{Fig:sinu4}). On the other hand, once we
recover the right frequencies, one can use hardware to estimate
magnitudes accurately.

\section{Conclusion}
We have proposed the linearized Bregman iterative algorithms as a
competitive method for solving the compressed sensing problem.
Besides the simplicity of the algorithm, the special structure of
the iteration enables the kicking scheme to accelerate the
algorithm even when $\mu$ is extremely large. As a result, a
sparse solution can always be approached efficiently.

It also turns out that our process has remarkable denoising
properties for undersampled sparse signals. We will pursue this in
further work.

Our results suggest there is a big category of problem that  can
be solved by linearized Bregman iterative algorithms. We hope that
our method and its extensions could produce even more applications
for problems under different scenarios, including very
underdetermined inverse problems in partial differential
equations.

\section{Acknowledgements}

S.O. was supported by ONR Grant N000140710810, a grant from the
Department of Defense and NIH Grant UH54RR021813; Y.M. and B.D.
were supported by NIH Grant UH54RR021813; W.Y.  was supported by
NSF Grant DMS-0748839 and an internal faculty research grant from
the Dean of Engineering at Rice University.

\bibliography{biblist}

\begin{thebibliography}{10}

\bibitem{DO}
J.~Darbon and S.~Osher.
\newblock Fast discrete optimizations for sparse approximations and
  deconvolutions.
\newblock preprint 2007.

\bibitem{YOGD}
W.~Yin, S.~Osher, D.~Goldfarb, and J.~Darbon.
\newblock Bregman iterative algorithms for compressed sensing and related
  problems.
\newblock {\em SIAM J. Imaging Sciences 1(1).}, pages 143--168, 2008.

\bibitem{COS1}
J.~Cai, S.~Osher, and Z.~Shen.
\newblock Linearized {B}regman iterations for compressed sensing.
\newblock {\em Math. Comp.}, 2008.
\newblock to appear, see also {U}CLA CAM Report 08-06.

\bibitem{COS2}
J.~Cai, S.~Osher, and Z.~Shen.
\newblock Convergence of the linearized {B}regman iteration for $\ell_1$-norm
  minimization.
\newblock {U}CLA CAM Report 08-52, 2008.

\bibitem{CRT}
E.~Candes, J.~Romberg, and T.~Tao.
\newblock {Robust uncertainty principles: exact signal reconstruction from
  highly incomplete frequency information}.
\newblock 52(2):489--509, 2006.

\bibitem{Do2}
D.L. Donoho.
\newblock {Compressed sensing}.
\newblock {\em IEEE Trans. Inform. Theory}, 52:1289--1306, 2006.

\bibitem{OBGXY}
S.~Osher, M.~Burger, D.~Goldfarb, J.~Xu, and W.~Yin.
\newblock {An iterative regularization method for total variation based image
  restoration}.
\newblock {\em Multiscale Model. Simul}, 4(2):460--489, 2005.

\bibitem{HYZ}
E.~Hale, W.~Yin, and Y.~Zhang.
\newblock A fixed-point continuation method for $\ell_1$-regularization with
  application to compressed sensing.
\newblock {C}AAM Technical Report TR07-07, Rice University, Houston, TX, 2007.

\bibitem{ROF}
L.~Rudin, S.~Osher, and E.~Fatemi.
\newblock {Nonlinear total variation based noise removal algorithms}.
\newblock {\em Phys. D}, 60:259--268, 1992.

\bibitem{CHF}
T-C. Chang, L.~He, and T.~Fang.
\newblock Mr image reconstruction from sparse radial samples using bregman
  iteration.
\newblock {\em Proceedings of the 13th Annual Meeting of ISMRM}, 2006.

\bibitem{LOT}
Y.~Li, S.~Osher, and Y.-H. Tsai.
\newblock Recovery of sparse noisy date from solutions to the heat equation.
\newblock in preparation.

\bibitem{Br}
L.M. Bregman.
\newblock {The relaxation method of finding the common point of convex sets and
  its application to the solution of problems in convex programming}.
\newblock {\em USSR Computational Mathematics and Mathematical Physics},
  7(3):200--217, 1967.

\bibitem{Do1}
D.L. Donoho.
\newblock {De-noising by soft-thresholding}.
\newblock {\em IEEE Trans. Inform. Theory}.

\bibitem{Yinprivate}
W.~Yin.
\newblock On the linearized bregman algorithm.
\newblock private communication.

\bibitem{Ba}
M.~Bachmayr.
\newblock {Iterative total variation methods for nonlinear inverse problems}.
\newblock {\em Master's thesis, Johannes Kepler Universit\"at, Linz, Austria},
  2007.

\end{thebibliography}

\begin{figure}
    \centering
    \includegraphics[width=6.0in]{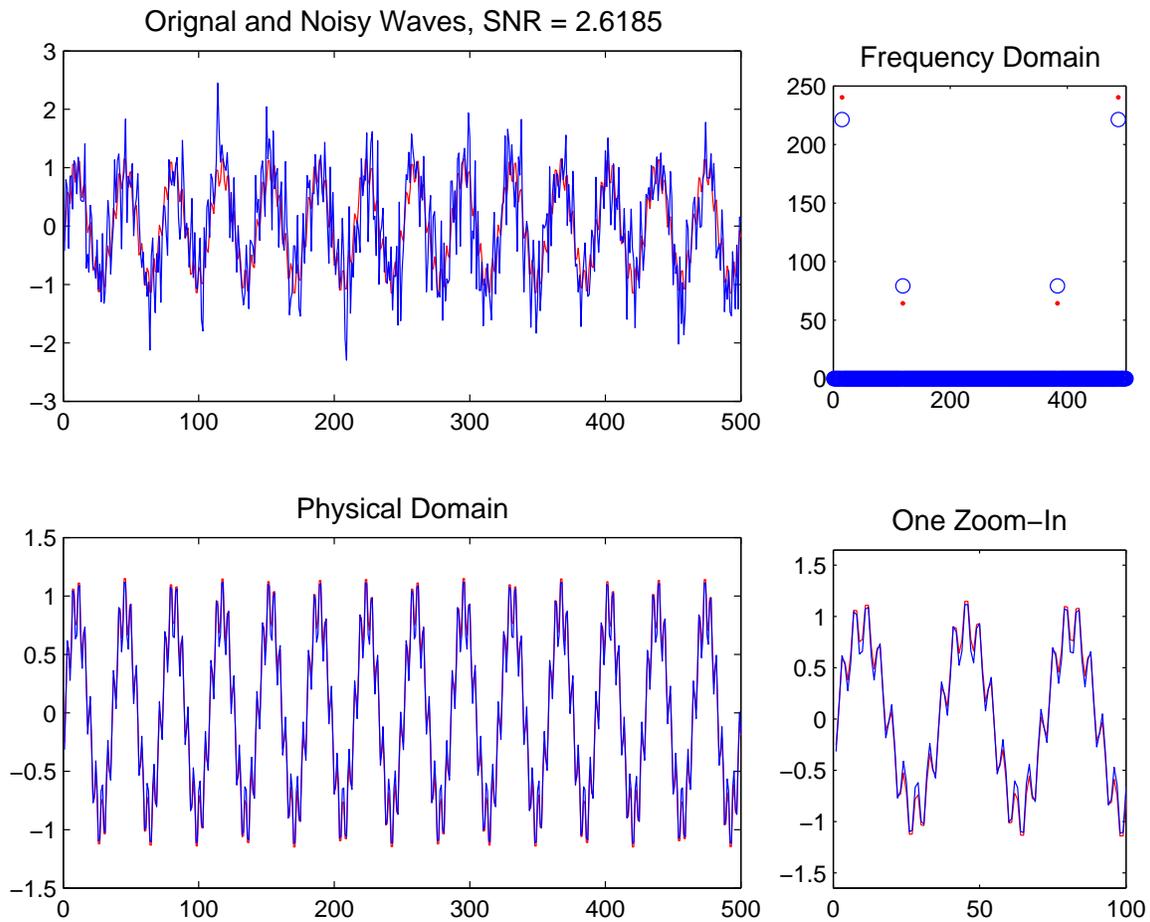}
    \caption{Reconstruction using 20\% random samples of $\tilde u$ with
SNR$=2.6185$. The upper left figure
    shows the original (red) and noisy (blue) signals; the upper right shows the reconstruction (blue circle) v.s. original signal
    (red dots) in Fourier domain in terms of their magnitudes (i.e. $|\widehat u^*|$ v.s. $|\widehat {\bar u}|$);
    bottom left shows the reconstructed (blue) v.s. original (red) signal in physical domain; and bottom right shows
    one close-up of the figure at bottom left.}\label{Fig:sinu1}
\end{figure}

\begin{figure}
    \centering
    \includegraphics[width=6.0in]{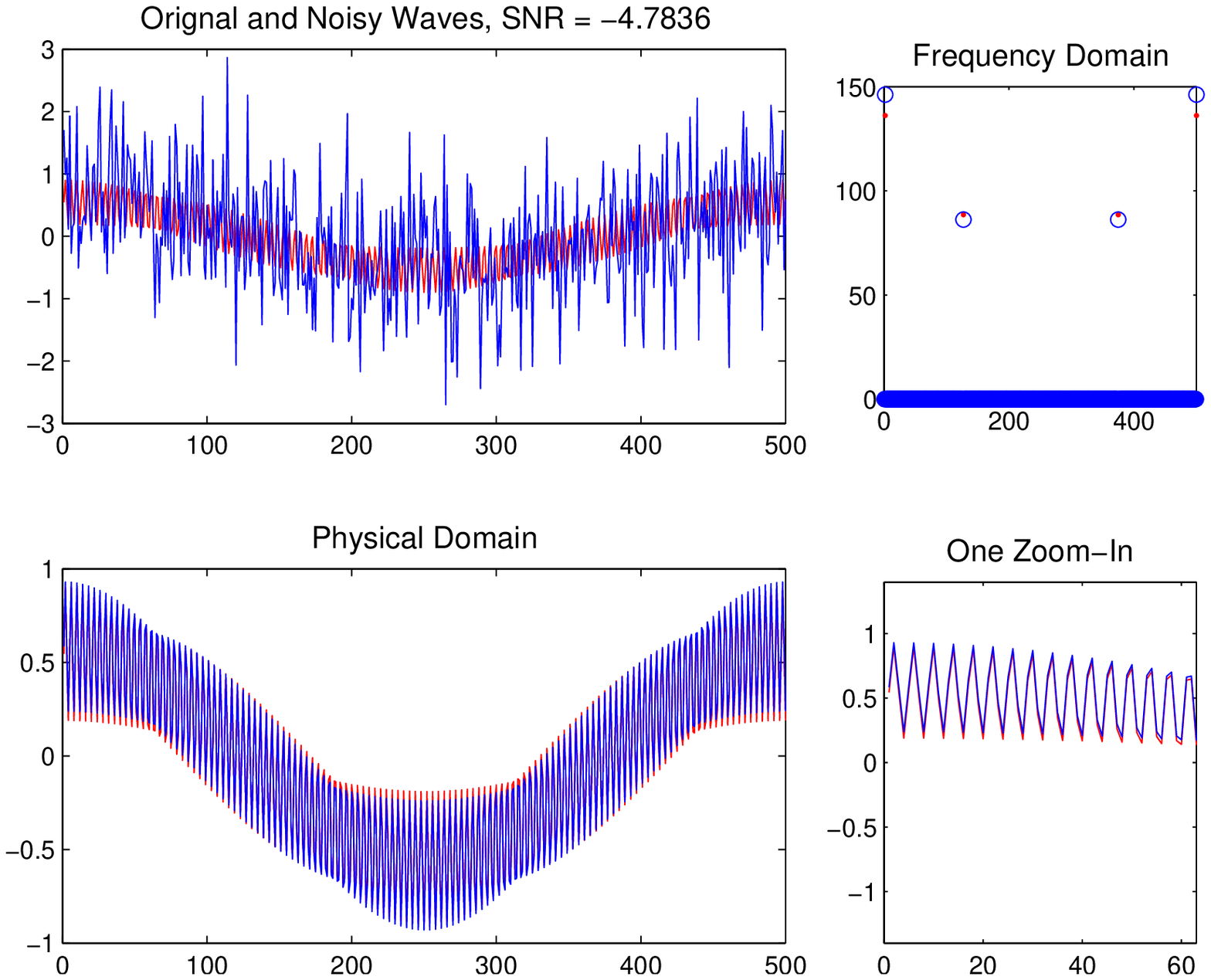}
    \caption{Reconstruction using 40\% random samples of $\tilde u$
with SNR$=-4.7836$. The upper left figure
    shows the original (red) and noisy (blue) signals; the upper right shows the reconstruction (blue circle) v.s. original signal
    (red dots) in Fourier domain in terms of their magnitudes (i.e. $|\widehat u^*|$ v.s. $|\widehat {\bar u}|$);
    bottom left shows the reconstructed (blue) v.s. original (red) signal in physical domain; and bottom right shows
    one close-up of the figure at bottom left.}\label{Fig:sinu2}
\end{figure}

\begin{figure}
    \centering
    \includegraphics[width=6.0in]{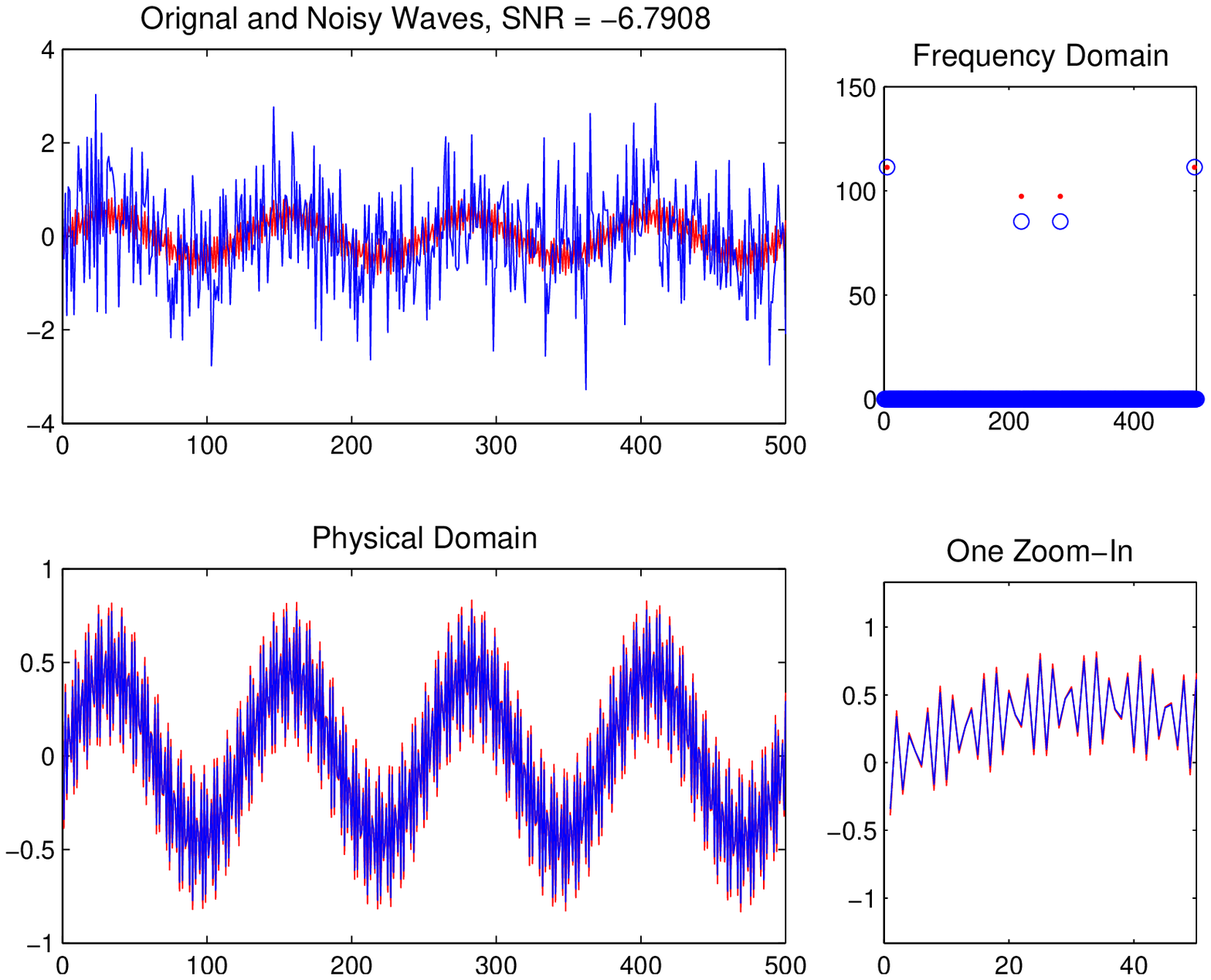}
    \caption{Reconstruction using 60\% random samples of $\tilde u$
with SNR$=-6.7908$. The upper left figure
    shows the original (red) and noisy (blue) signals; the upper right shows the reconstruction (blue circle) v.s. original signal
    (red dots) in Fourier domain in terms of their magnitudes (i.e. $|\widehat u^*|$ v.s. $|\widehat {\bar u}|$);
    bottom left shows the reconstructed (blue) v.s. original (red) signal in physical domain; and bottom right shows
    one close-up of the figure at bottom left.}\label{Fig:sinu3}
\end{figure}
\begin{figure}
    \centering
    \includegraphics[width=6.0in]{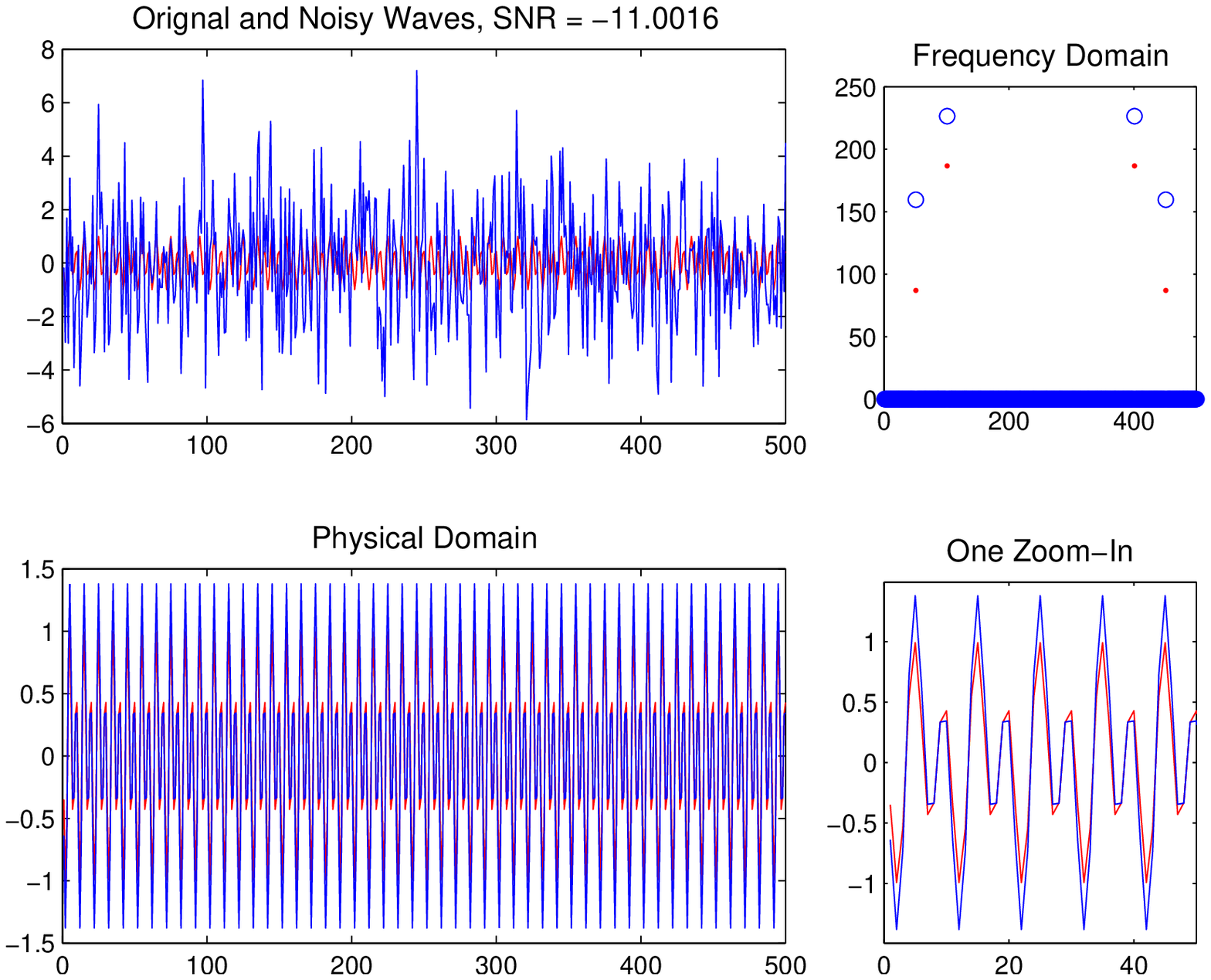}
    \caption{Reconstruction using 80\% random samples of $\tilde u$
with SNR$=-11.0016$. The upper left figure
    shows the original (red) and noisy (blue) signals; the upper right shows the reconstruction (blue circle) v.s. original signal
    (red dots) in Fourier domain in terms of their magnitudes (i.e. $|\widehat u^*|$ v.s. $|\widehat {\bar u}|$);
    bottom left shows the reconstructed (blue) v.s. original (red) signal in physical domain; and bottom right shows
    one close-up of the figure at bottom left.}\label{Fig:sinu4}
\end{figure}
\end{document}